\DeclareMathOperator*{\esssup}{ess\,sup}
\newtheorem{thm}{Theorem}[section]
\newtheorem{cor}[thm]{Corollary}
\newtheorem{lem}[thm]{Lemma}
\newtheorem{prop}[thm]{Proposition}
\newtheorem{defn}[thm]{Definition}
\newtheorem{rem}[thm]{Remark}
\numberwithin{equation}{section}
\renewcommand{\thefootnote}
\renewcommand\Im{\operatorname{Im}}
\newcommand{\X}{\mathcal{X}}
\author {Ahmed Saoudi}
\title{Two-wavelet theory in Weinstein setting}
\date{}
\begin{document}
 \maketitle
\begin{center}
     Northern Border University, College of Science, Arar, P.O. Box 1631, Saudi Arabia.\\
   Universit\'{e} de Tunis El Manar, Facult\'{e} des sciences de Tunis, Tunisie.\\
     \textbf{ e-mail:} ahmed.saoudi@ipeim.rnu.tn
\end{center}
  \begin{abstract}
In this paper we introduce the notion of a Weinstein two-wavelet. Then we establish and prove the resolution
of the identity formula for the Weinstein continuous wavelet transform. Next,  we give results on Calder\'on's type reproducing formula in the context of the Weinstein two-wavelet.

 \textbf{ Keywords}.   Weinstein operator; Weinstein wavelet transform; Weinstein two-wavelet transform;  Calder\'on's type reproducing formula. \\
\textbf{Mathematics Subject Classification}. Primary 43A32; Secondary 44A15
 \end{abstract}
  \section{ Introduction}
  The Weinstein operator $\Delta_{W,\alpha}^d$ defined on $\mathbb{R}_{+}^{d+1}=\mathbb{R}^d\times(0, \infty)$, by
\begin{equation*}
\Delta_{W,\alpha}^d=\sum_{j=1}^{d+1}\frac{\partial^2}{\partial x_j^2}+\frac{2\alpha+1}{x_{d+1}}\frac{\partial}{\partial x_{d+1}}=\Delta_d+L_\alpha,\;\alpha>-1/2,
\end{equation*}
where $\Delta_d$ is the Laplacian operator for the $d$ first variables and $L_\alpha$ is the Bessel operator for the last variable defined on $(0,\infty)$ by
$$L_\alpha u=\frac{\partial^2 u}{\partial x_{d+1}^2}+\frac{2\alpha+1}{x_{d+1}}\frac{\partial u}{\partial x_{d+1}}.$$
The Weinstein operator $\Delta_{W,\alpha}^d$ has several applications in pure and applied mathematics, especially in fluid mechanics \cite{brelot1978equation, weinstein1962singular}.

Very recently, many authors have been investigating the behaviour of the Weinstein
transform (\ref{defWeinstein}) with respect to several problems already studied for the classical Fourier transform.
For instance,   Heisenberg-type inequalities \cite{salem2015heisenberg}, Littlewood-Paley g-function \cite{salem2016littlewood},
Shapiro and Hardy–Littlewood–Sobolev type inequalities \cite{salem2020hardy, salem2015shapiro},
 Paley-Wiener theorem \cite{mehrez2017paley}, Uncertainty principles \cite{mejjaoli2011uncertainty, ahmed2018variation, saoudi2019l2}, multiplier Weinstein operator \cite{ahmed2018calder}, wavelet and continuous wavelet transform \cite{gasmi2016inversion, mejjaoli2017new}, Wigner transform and localization operators \cite{saoudi2019weinstein, saoudilocalisation}, and so forth...

In the classical setting, the notion of wavelets was first introduced by Morlet in connection with his study of seismic traces and the mathematical foundations were given by Grossmann and Morlet \cite{grossmann1984decomposition}. Later, Meyer and many other mathematicians recognized many classical results of this theory \cite{koornwinder1993continuous, meyer1992wavelets}. Classical wavelets have wide applications, ranging from signal analysis in geophysics and acoustics to quantum theory and pure mathematics \cite{daubechies1992ten, goupillaud1984cycle, holschneider1995wavelets}.

Recently, the theory of wavelets and continuous wavelet transform has been extended and generalized in the context of differential-differences operators \cite{gasmi2016inversion, mejjaoli2017dunkl, mejjaoli2017new,  mejjaoli2017time}.

Wavelet analysis has attracted attention for its ability to analyze rapidly changing transient signals. Any application using the Fourier like transform can be formulated using wavelets to provide more accurately localized temporal
and frequency information. The reason for the extension from one wavelet to two wavelets comes from the extra
degree of flexibility in signal analysis and imaging when the localization operators are
used as time-varying filters. This paper is an attempt to fill this gap by extending one wavelet to two wavelets in  Weinstein setting.

Using the harmonic analysis associated with the Weinstein operator (generalized translation operators,
generalized convolution, Weinstein transform, ...) and the same idea as for the classical case,
we define and study in this paper the notion of a Weinstein two-wavelet.
For $\varphi$ and $\psi$ be in $L^2_{\alpha}(\mathbb{R}^{d+1}_+)$, the pair $(\varphi,\psi)$ is said  a Weinstein
two-wavelet on $\mathbb{R}^{d+1}_+$ if the following integral
\begin{equation*}
  C_{\varphi,\psi}=\int_{0}^{\infty}\mathcal{F}_{W}(\psi)(a\xi)\overline{\mathcal{F}_{W}(\varphi)}
  (a\xi)\frac{da}{a}
\end{equation*}
is constant for almost all  $\xi\in \mathbb{R}^{d+1}_+$.
We prove for  this Weinstein two-wavelet a Parseval type formula
  \begin{equation*}
    \int_{\X}\Phi^W_\varphi(f)(a,x)\overline{\Phi^W_\psi(g)(a,x)}d\mu_\alpha(a,x)=C_{\varphi,\psi}\int_{\mathbb{R}^{d+1}_+}f(x)\overline{g(x)} d\mu_\alpha(x),
  \end{equation*}
for all $\varphi$ and $\psi$ in $L^2_{\alpha}(\mathbb{R}^{d+1}_+)$ where $\Phi^W_\varphi$ is a Weinstein continuous wavelet transform  on $\mathbb{R}^{d+1}_+$  defined for regular functions $f$ on $\mathbb{R}^{d+1}_+$  in \cite{mejjaoli2017new} by
  \begin{equation*}\label{contwave}
    \forall (a,x)\in\X,\quad \Phi^W_\varphi(f)(a,x)=\int_{\mathbb{R}^{d+1}_+}f(y)\overline{\varphi_{a,x}(y)} d\mu_\alpha(y)=\langle f, \varphi_{a,x}\rangle_{\alpha,2}.
  \end{equation*}
Next, we prove for the pair $(\varphi,\psi)$ an inversion formula for all $f\in L^1_{\alpha}(\mathbb{R}^{d+1}_+)$ (resp. $L^2_{\alpha}(\mathbb{R}^{d+1}_+)$) such that $\mathcal{F}_{W}(f)$  belongs to $f\in L^1_{\alpha}(\mathbb{R}^{d+1}_+)$
  (resp. $L^1_{\alpha}(\mathbb{R}^{d+1}_+)$ $\cap L^\infty_{\alpha}(\mathbb{R}^{d+1}_+)$), of the form
  \begin{equation*}
    f(y)=\frac{1}{C_{\varphi,\psi}}\int_{0}^{\infty}\int_{\mathbb{R}^{d+1}_+}\Phi^W_\varphi(f)(a,x)\psi_{a,x}(y)\mu_\alpha(a,x),
  \end{equation*}
where for each $y\in\mathbb{R}^{d+1}_+ $\, both the inner integral and the outer integral are absolutely
convergent, but eventually not the double integral.
 In the end, we prove for the pair $(\varphi,\psi)$ a Calder\'on's type formulas. For $\varphi$ and $\psi$ be two Weinstein wavelets in $L^2_{\alpha}(\mathbb{R}^{d+1}_+)$ such that  $(\varphi,\psi)$ be a Weinstein two-wavelet, $C_{\varphi,\psi}\neq 0$ and $\mathcal{F}_{W}(\varphi)$, and $\mathcal{F}_{W}(\psi)$ are in $L^\infty_{\alpha}(\mathbb{R}^{d+1}_+)$. Then for all $f$ in $L^2_{\alpha}(\mathbb{R}^{d+1}_+)$, the function
 \begin{equation*}
   f_{\gamma,\delta}(y)=\frac{1}{C_{\varphi,\psi}}\int_{\gamma}^{\delta}\int_{\mathbb{R}^{d+1}_+}
   \Phi^W_\varphi(f)(a,x)\psi_{a,x}(y)d\mu_\alpha(x)\frac{da}{a^{2\alpha+d+3}}, \quad y\in \mathbb{R}^{d+1}_+
 \end{equation*}
  belongs to $L^2_{\alpha}(\mathbb{R}^{d+1}_+)$ and satisfies
 \begin{equation*}\label{limf}
   \lim_{(\gamma,\delta)\to (0,\infty)}\|f_{\gamma,\delta}-f\|_{\alpha,2}=0.
 \end{equation*}

This paper is organized as follows. In Section 2, we recall some properties of harmonic
analysis for the Weinstein operators. In Section 3, we define    the two-wavelet in Weinstein  in setting and prove for it a Parseval type formula and we establish for it an inversion formula. In last Section, we introduce a
Calder\'on's type reproducing formula.
 \section{Preliminaires}

For all $\lambda=(\lambda_1,...,\lambda_{d+1})\in\mathbb{C}^{d+1}$, the system
\begin{equation}
\begin{gathered}
\frac{\partial^2u}{\partial x_{j}^2}(  x)
  =-\lambda_{j} ^2u(x), \quad\text{if } 1\leq j\leq d \\
L_{\alpha}u(  x)  =-\lambda_{d+1}^2u(  x), \\
u(  0)  =1, \quad \frac{\partial u}{\partial
x_{d+1}}(0)=0,\quad \frac{\partial u}{\partial
x_{j}}(0)=-i\lambda_{j}, \quad \text{if } 1\leq j\leq d
\end{gathered}
\end{equation}
 has a unique solution  denoted by $\Lambda_{\alpha}^d(\lambda,.),$ and given by
\begin{equation}\label{wkernel}
\Lambda_{\alpha}^d(\lambda,x)=e^{-i<x^\prime,\lambda^\prime>}j_\alpha(x_{d+1}\lambda_{d+1})
\end{equation}
 where $x=(x^\prime,x_{d+1}),\; x_d'=(x_1,x_2,\cdots,x_d),\; \lambda=(\lambda^\prime,\lambda_{d+1}) ,\; \lambda_d'=(\lambda_1,\lambda_2,\cdots,\lambda_d)$ and $j_\alpha$ is the normalized Bessel function of index $\alpha$ defined by
$$j_\alpha(x)=\Gamma(\alpha+1)\sum_{k=0}^\infty\frac{(-1)^k x^{2k}}{2^k k!\Gamma(\alpha+k+1)}.$$
The function $(\lambda,x)\mapsto\Lambda_{\alpha}^d(\lambda,x)$ is called the Weinstein kernel and has a unique extension to $\mathbb{C}^{d+1}\times\mathbb{C}^{d+1}$, and satisfied the following properties.\\
\begin{itemize}
\item[(i)] For all $(\lambda,x)\in \mathbb{C}^{d+1}\times\mathbb{C}^{d+1}$ we have
\begin{equation*}
\Lambda_{\alpha}^d(\lambda,x)=\Lambda_{\alpha}^d(x,\lambda).
\end{equation*}
\item[(ii)] For all $(\lambda,x)\in \mathbb{C}^{d+1}\times\mathbb{C}^{d+1}$ we have
\begin{equation*}
\Lambda_{\alpha}^d(\lambda,-x)=\Lambda_{\alpha}^d(-\lambda,x).
\end{equation*}
\item[(iii)] For all $(\lambda,x)\in \mathbb{C}^{d+1}\times\mathbb{C}^{d+1}$ we get
\begin{equation*}
\Lambda_{\alpha}^d(\lambda,0)=1.
\end{equation*}
\item[(iv)] For all $\nu\in\mathbb{N}^{d+1},\;x\in\mathbb{R}^{d+1}$ and $\lambda\in\mathbb{C}^{d+1}$ we have
\begin{equation*}\label{klk}
 \left|D_\lambda^\nu\Lambda_{\alpha}^d(\lambda,x)\right|\leq\left\|x\right\|^{\left|\nu\right|}e^{\left\|x\right\|\left\|\Im \lambda\right\|}
\end{equation*}
\end{itemize}
where $D_\lambda^\nu=\partial^\nu/(\partial\lambda_1^{\nu_1}...\partial\lambda_{d+1}^{\nu_{d+1}})$ and $\left|\nu\right|=\nu_1+...+\nu_{d+1}.$ In particular, for all $(\lambda,x)\in \mathbb{R}^{d+1}\times\mathbb{R}^{d+1}$, we have
\begin{equation}\label{normLambda}
\left|\Lambda_{\alpha}^d(\lambda,x)\right|\leq 1.
\end{equation}

In the following we denote by
\begin{itemize}
\item [(i)] $-\lambda=(-\lambda',\lambda_{d+1})$
\item[(ii)] $C_*(\mathbb{R}^{d+1})$, the space of continuous functions on $\mathbb{R}^{d+1},$ even with respect to the last variable.
    \item[(iii)] $S_*(\mathbb{R}^{d+1})$, the space of the $C^\infty$ functions, even with respect to the last variable, and rapidly decreasing together with their derivatives.
  \item[(iv)] $\mathcal{S_*}(\mathbb{R}^{d+1}\times\mathbb{R}^{d+1})$, the Schwartz space of rapidly decreasing functions on $\mathbb{R}^{d+1}\times\mathbb{R}^{d+1}$ even with respect to the last two variables.
  \item[(v)]  $\mathcal{D_*}(\mathbb{R}^{d+1})$, the space of $C^\infty$-functions on $\mathbb{R}^{d+1}$ which are of compact support,even with respect to the last variable.
	\item[(vi)] $L^p_\alpha(\mathbb{R}^{d+1}_+),\;1\leq p\leq \infty,$ the space of measurable functions $f$ on $\mathbb{R}^{d+1}_+$ such that
	$$\left\|f\right\|_{\alpha,p}=\left(\int_{\mathbb{R}^{d+1}_+}\left|f(x)\right|^pd\mu_\alpha(x)\right)^{1/p}<\infty, \;p\in[1,\infty),$$
	$$\left\|f\right\|_{\alpha,\infty}=\textrm{ess}\sup_{x\in\mathbb{R}^{d+1}_+}\left|f(x)\right|<\infty,$$

where $d\mu_{\alpha}(x)$ is the measure on  $\mathbb{R}_{+}^{d+1}=\mathbb{R}^d\times(0,\infty)$ given by
\begin{equation*}\label{mesure}
	d\mu_\alpha(x)=\frac{x^{2\alpha+1}_{d+1}}{(2\pi)^d2^{2\alpha}\Gamma^2(\alpha+1)}dx.
	\end{equation*}
\end{itemize}

For a radial function $\varphi\in L_{\alpha}^{1}(\mathbb{R}_{+} ^{d+1})$ the function $\tilde{\varphi}$ defined on $\mathbb{R}_+$ such that $\varphi(x)=\tilde{\varphi}(|x|)$, for all
$x\in\mathbb{R}_{+} ^{d+1}$, is integrable with respect to the measure $r^{2\alpha+d+1}dr$, and we have
\begin{equation}\label{radialweinstein}
  \int_{\mathbb{R}_{+}^{d+1}}\varphi(x)d\mu_{\alpha}(x)=a_\alpha\int_{0}^{\infty}
  \tilde{\varphi}(r)r^{2\alpha+d+1}dr,
\end{equation}
where $$a_\alpha=\frac{1}{2^{\alpha+\frac{d}{2}}\Gamma(\alpha+\frac{d}{2}+1)}.$$
The Weinstein transform generalizing the usual Fourier transform, is given for
$\varphi\in L_{\alpha}^{1}(\mathbb{R}_{+} ^{d+1})$ and $\lambda\in\mathbb{R}_{+}^{d+1}$, by

\begin{equation}\label{defWeinstein}
\mathcal{F}_{W}
(\varphi)(\lambda)=\int_{\mathbb{R}_{+}^{d+1}}\varphi(x)\Lambda_{\alpha}^d(x, \lambda
)d\mu_{\alpha}(x),
\end{equation}

We list some known basic properties of the Weinstein transform are as follows. For the proofs, we refer \cite{nahia1996spherical, nahia1996mean}.

\begin{itemize}
	\item[(i)] For all $\varphi\in L^1_\alpha(\mathbb{R}^{d+1}_+)$, the function $\mathcal{F}_{W}(\varphi)$  is continuous on $\mathbb{R}^{d+1}_+$ and we have
	\begin{equation}\label{L1-Linfty}
	\left\|\mathcal{F}_{W}\varphi\right\|_{\alpha,\infty}\leq\left\|\varphi\right\|_{\alpha,1}.
	\end{equation}
	\item[(ii)]   The Weinstein transform is a topological isomorphism from $\mathcal{S}_*(\mathbb{R}^{d+1})$ onto itself. The inverse transform is given by
	\begin{equation}\label{inversionweinstein}
	\mathcal{F}_{W}^{-1}\varphi(\lambda)= \mathcal{F}_{W}\varphi(-\lambda),\;\textrm{for\;all}\;\lambda\in\mathbb{R}^{d+1}_+.
	\end{equation}
	\item[(iii)] For all $f$ in $\mathcal{D_*}(\mathbb{R}^{d+1})$ (resp. $\mathcal{S_*}(\mathbb{R}^{d+1})$),  we have the following relations
\begin{equation}\label{fourierbar}
 \forall\lambda\in\mathbb{R}^{d+1}_+,\quad \mathcal{F}_{W}(\overline{\varphi})(\lambda)= \overline{\mathcal{F}_{W}(\widetilde{\varphi})(\lambda)},
\end{equation}
\begin{equation}\label{fourierbar1}
 \forall\lambda\in\mathbb{R}^{d+1}_+,\quad \mathcal{F}_{W}(\varphi)(\lambda)= \mathcal{F}_{W}(\widetilde{\varphi})(-\lambda),
\end{equation}
where $\widetilde{\varphi}$ is the function defined by
\begin{equation*}
  \forall\lambda\in\mathbb{R}^{d+1}_+,\quad\widetilde{\varphi}(\lambda)=\varphi(-\lambda).
\end{equation*}
	\item[(iv)] Parseval's formula: For all $\varphi, \phi\in \mathcal{S}_*(\mathbb{R}^{d+1})$, we have
	\begin{equation}\label{MM} \int_{\mathbb{R}^{d+1}_+}\varphi(x)\overline{\phi(x)}d\mu_\alpha(x)=\int_{\mathbb{R}^{d+1}_+}\mathcal{F}_{W}
(\varphi)(x)\overline{\mathcal{F}_{W}(\phi)(x)}d\mu_\alpha(x).
	\end{equation}
\item[(v)] Plancherel's formula: For all $\varphi\in L^2_\alpha(\mathbb{R}^{d+1}_+)$, we have
\begin{equation}\label{Plancherel formula}
	\left\|\mathcal{F}_{W}\varphi\right\|_{\alpha,2}=\left\|\varphi\right\|_{\alpha,2}.
	\end{equation}
\item[(vi)] Plancherel Theorem: The Weinstein transform $\mathcal{F}_{W}$ extends uniquely to an isometric isomorphism on $L^2_\alpha(\mathbb{R}^{d+1}_+).$
\item[(vii)] Inversion formula: Let $\varphi\in L^1_\alpha(\mathbb{R}^{d+1}_+)$ such that $\mathcal{F}_{W}\varphi\in L^1_\alpha(\mathbb{R}^{d+1}_+)$,  then we have
\begin{equation}\label{inv}
\varphi(\lambda)=\int_{\mathbb{R}^{d+1}_+}\mathcal{F}_{W}\varphi(x)\Lambda_{\alpha}^d(-\lambda,x)d\mu_\alpha(x),\;\textrm{a.e. }\lambda\in\mathbb{R}^{d+1}_+.
\end{equation}
\end{itemize}

Using relations (\ref{L1-Linfty}) and (\ref{Plancherel formula}) with Marcinkiewicz's interpolation theorem \cite{zbMATH03367521} we deduce that for every $\varphi\in L^p_\alpha(\mathbb{R}^{d+1}_+)$ for all $1\leq p\leq 2$, the function $\mathcal{F}_{W}(\varphi)\in L^q_\alpha(\mathbb{R}^{d+1}_+), q=p/(p-1),$ and
\begin{equation}\label{Lp-Lq}
	\left\|\mathcal{F}_{W}\varphi\right\|_{\alpha,q}\leq\left\|\varphi\right\|_{\alpha,p}.
	\end{equation}

\begin{defn} The translation operator $\tau^\alpha_x,\;x\in\mathbb{R}^{d+1}_+$ associated with the Weinstein operator $\Delta_{W,\alpha}^d$, is defined for a continuous function $\varphi$ on $\mathbb{R}^{d+1}_+$, which is even with respect to the last variable and for all $y\in\mathbb{R}^{d+1}_+$ by
$$\tau^\alpha_x\varphi(y)=C_\alpha\int_0^\pi\varphi\left(x^\prime+y\prime,\sqrt{x^2_{d+1}+y^2_{d+1}+2x_{d+1}y_{d+1}
\cos\theta}\right)\left(\sin\theta\right)^{2\alpha}d\theta,$$
with $$C_\alpha=\frac{\Gamma(\alpha+1)}{\sqrt{\pi}\Gamma(\alpha+1/2)}.$$
\end{defn}
By using the Weinstein kernel, we can also define a generalized translation, for
a function $\varphi\in\mathcal{S}_*(\mathbb{R}^{d+1})$ and $y\in\mathbb{R}^{d+1}_+$ the generalized translation $\tau^\alpha_x\varphi$ is defined by the following relation
\begin{equation}\label{MMM}
\mathcal{F}_{W}(\tau^\alpha_x\varphi)(y)=\Lambda^d_\alpha(x,y)\mathcal{F}_{W}(\varphi)(y).
\end{equation}

In the following proposition, we give some properties of the Weinstein
translation operator:

\begin{prop} The translation operator $\tau^\alpha_x,\;x\in\mathbb{R}^{d+1}_+$ satisfies the following properties.\\
i). For $\varphi\in\mathbb{C}_*(\mathbb{R}^{d+1})$, we have for all $x,y\in\mathbb{R}^{d+1}_+$
\begin{equation}\label{symtrictrans}
\tau^\alpha_x\varphi(y)=\tau^\alpha_y\varphi(x)\;\textrm{and}\;\tau^\alpha_0\varphi=\varphi.
\end{equation}
ii). Let $\varphi\in L^p_\alpha(\mathbb{R}^{d+1}_+),\;1\leq p\leq \infty$ and $x\in\mathbb{R}^{d+1}_+$. Then  $\tau^\alpha_x\varphi$ belongs to $L^p_\alpha(\mathbb{R}^{d+1}_+)$ and we have
\begin{equation}\label{ineqtransl}
\left\| \tau^\alpha_x\varphi\right\|_{\alpha,p}\leq \left\|\varphi\right\|_{\alpha,p}.
\end{equation}
\end{prop}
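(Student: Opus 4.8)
The plan is to extract both parts directly from the explicit integral formula for $\tau^\alpha_x$, using throughout only that $C_\alpha\,(\sin\theta)^{2\alpha}\,d\theta$ is a probability measure on $(0,\pi)$; equivalently that $C_\alpha\int_0^\pi(\sin\theta)^{2\alpha}\,d\theta=C_\alpha\,B\!\left(\alpha+\tfrac12,\tfrac12\right)=1$, which holds by the definition of $C_\alpha$ and the Beta--Gamma identity (and uses $\alpha>-1/2$). Part (i) is then a symmetry observation: in the integrand of $\tau^\alpha_x\varphi(y)$ both the first argument $x'+y'$ and the radical $\sqrt{x_{d+1}^2+y_{d+1}^2+2x_{d+1}y_{d+1}\cos\theta}$ are invariant under $x\leftrightarrow y$, hence so is the whole integrand, giving $\tau^\alpha_x\varphi(y)=\tau^\alpha_y\varphi(x)$; and for $\tau^\alpha_0\varphi=\varphi$ one sets $x=0$, so the first argument becomes $y'$ and the radical becomes $y_{d+1}$, whence $\tau^\alpha_0\varphi(y)=\varphi(y)\,C_\alpha\int_0^\pi(\sin\theta)^{2\alpha}\,d\theta=\varphi(y)$. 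The same computation with $\varphi\equiv1$ also records $\tau^\alpha_x\mathbf 1=\mathbf 1$, which I will use in (ii).

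For part (ii) I would first dispose of $p=\infty$, which is immediate since $|\tau^\alpha_x\varphi(y)|\le C_\alpha\|\varphi\|_{\alpha,\infty}\int_0^\pi(\sin\theta)^{2\alpha}\,d\theta=\|\varphi\|_{\alpha,\infty}$. For $1\le p<\infty$, Jensen's inequality applied to the probability measure $C_\alpha(\sin\theta)^{2\alpha}\,d\theta$ gives the pointwise bound $|\tau^\alpha_x\varphi(y)|^p\le\tau^\alpha_x(|\varphi|^p)(y)$, so that $\|\tau^\alpha_x\varphi\|_{\alpha,p}\le\|\varphi\|_{\alpha,p}$ reduces to the single mass-preservation identity
\begin{equation*}
  \int_{\mathbb{R}^{d+1}_+}\tau^\alpha_x\Psi(y)\,d\mu_\alpha(y)=\int_{\mathbb{R}^{d+1}_+}\Psi(y)\,d\mu_\alpha(y),\qquad \Psi\ge0,
\end{equation*}
applied with $\Psi=|\varphi|^p$ (running the same estimate with $\Psi=|\varphi|$ simultaneously shows the defining integral for $\tau^\alpha_x\varphi$ converges for a.e.\ $y$ and produces an $L^p_\alpha$ function, which is how one passes from the continuous functions of the Definition to general $\varphi\in L^p_\alpha$).

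To prove the mass-preservation identity I would apply Tonelli, translate $y'\mapsto y'-x'$ in the first $d$ variables — an invariance of $d\mu_\alpha$, since the weight $y_{d+1}^{2\alpha+1}$ does not involve $y'$ — and be left with the one-variable integral $C_\alpha\int_0^\infty\!\!\int_0^\pi\Psi\big(u',\sqrt{x_{d+1}^2+t^2+2x_{d+1}t\cos\theta}\big)(\sin\theta)^{2\alpha}t^{2\alpha+1}\,d\theta\,dt$. The key manoeuvre is to pass to the upper half-plane: after the substitution $\theta\mapsto\pi-\theta$, the polar-type change of variables $(t,\theta)\mapsto(u,v)=(t\cos\theta,\,t\sin\theta)$ carries $C_\alpha(\sin\theta)^{2\alpha}t^{2\alpha+1}\,d\theta\,dt$ to $C_\alpha v^{2\alpha}\,du\,dv$ on $\{v>0\}$ and the radical to the Euclidean distance $\sqrt{(u-x_{d+1})^2+v^2}$; now $u\mapsto u+x_{d+1}$ is again an invariance (the weight $v^{2\alpha}$ ignores $u$), and returning to polar coordinates $(u,v)=(r\cos\psi,r\sin\psi)$ the measure factors as $\big(C_\alpha\int_0^\pi(\sin\psi)^{2\alpha}d\psi\big)\,r^{2\alpha+1}\,dr=r^{2\alpha+1}\,dr$. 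Unwinding the normalising constant in $d\mu_\alpha$ then closes the identity, hence part (ii).

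The only genuinely non-formal step is this last change of variables, which is the product formula for the normalised Bessel function $j_\alpha$ in disguise; routing it through the half-plane is exactly what lets one sidestep both the explicit (and somewhat heavy) product-formula kernel and the fact that the naive substitution $t\mapsto\sqrt{x_{d+1}^2+t^2+2x_{d+1}t\cos\theta}$ is two-to-one when $\cos\theta<0$. Everything else — the symmetry in (i), the Jensen step, Tonelli, the $p=\infty$ bound, and the extension to general $L^p_\alpha$ functions — is routine bookkeeping.
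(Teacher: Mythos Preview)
The paper does not actually prove this proposition: it is stated as a known property of the Weinstein translation (the surrounding harmonic-analysis facts being referenced to \cite{nahia1996spherical, nahia1996mean}), with no proof environment between it and the next proposition. Your argument is correct and supplies a fully self-contained elementary proof.

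One point worth noting is that the mass-preservation identity you isolate is exactly the content of the paper's \emph{next} proposition (relation~(\ref{integraltransrad})), which the paper does prove, but by a different route: it combines the Fourier relation (\ref{MMM}) with the inversion formula (\ref{inv}), in effect reading off $\int\tau^\alpha_x\varphi\,d\mu_\alpha=\mathcal{F}_W(\tau^\alpha_x\varphi)(0)=\Lambda^d_\alpha(x,0)\,\mathcal{F}_W(\varphi)(0)=\int\varphi\,d\mu_\alpha$. Your half-plane change of variables proves the same identity without appealing to (\ref{MMM}); this is a genuine gain in logical independence, since (\ref{MMM}) is itself usually derived from the product formula for $j_\alpha$, which is precisely the computation your polar substitution encodes. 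Conversely, once one is willing to take (\ref{MMM}) as input, the paper's Fourier argument is a one-liner, and in fact yields (ii) for $p=2$ immediately via Plancherel and $|\Lambda^d_\alpha|\le1$.
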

\begin{prop}
   Let $\varphi\in L^1_{\alpha}(\mathbb{R}^{d+1}_+)$. Then for all $x\in \mathbb{R}^{d+1}_+$,
    \begin{equation}\label{integraltransrad}
      \int_{\mathbb{R}^{d+1}_+}\tau^\alpha_x\varphi(y)d\mu_\alpha(y)= \int_{\mathbb{R}^{d+1}_+}\varphi(y)  d\mu_\alpha(y).
    \end{equation}
  \end{prop}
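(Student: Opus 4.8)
The plan is to exploit the compatibility between the Weinstein transform and the translation operator expressed by (\ref{MMM}): evaluating that relation at the origin collapses the factor $\Lambda_\alpha^d(x,\cdot)$ to $1$, which is essentially the assertion (\ref{integraltransrad}). Since (\ref{MMM}) is only available on the Schwartz class, I would first prove the identity there and then transfer it to $L^1_\alpha(\mathbb{R}^{d+1}_+)$ by density.

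First I would observe that for every $\psi\in L^1_\alpha(\mathbb{R}^{d+1}_+)$ one has $\int_{\mathbb{R}^{d+1}_+}\psi(y)\,d\mu_\alpha(y)=\lim_{\lambda\to 0}\mathcal{F}_{W}(\psi)(\lambda)=:\mathcal{F}_{W}(\psi)(0)$: this comes from passing to the limit $\lambda\to 0$ in (\ref{defWeinstein}), which is justified by the bound $\bigl|\Lambda_\alpha^d(y,\lambda)\bigr|\le 1$ from (\ref{normLambda}) together with the pointwise value $\Lambda_\alpha^d(y,0)=1$ (property (iii) of the Weinstein kernel), via dominated convergence.

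Next, fix $x\in\mathbb{R}^{d+1}_+$ and take $\varphi\in\mathcal{S}_*(\mathbb{R}^{d+1})$. By (\ref{ineqtransl}) with $p=1$ the translate $\tau^\alpha_x\varphi$ lies in $L^1_\alpha(\mathbb{R}^{d+1}_+)$, so the previous observation applies to it. Evaluating (\ref{MMM}) at the origin and using $\Lambda_\alpha^d(x,0)=1$ then gives
\begin{equation*}
\int_{\mathbb{R}^{d+1}_+}\tau^\alpha_x\varphi(y)\,d\mu_\alpha(y)=\mathcal{F}_{W}(\tau^\alpha_x\varphi)(0)=\Lambda_\alpha^d(x,0)\,\mathcal{F}_{W}(\varphi)(0)=\int_{\mathbb{R}^{d+1}_+}\varphi(y)\,d\mu_\alpha(y),
\end{equation*}
which is (\ref{integraltransrad}) for Schwartz $\varphi$.

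Finally, for an arbitrary $\varphi\in L^1_\alpha(\mathbb{R}^{d+1}_+)$ I would pick $\varphi_n\in\mathcal{S}_*(\mathbb{R}^{d+1})$ with $\left\|\varphi_n-\varphi\right\|_{\alpha,1}\to 0$ (density of $\mathcal{S}_*(\mathbb{R}^{d+1})$ in $L^1_\alpha(\mathbb{R}^{d+1}_+)$ being standard). The functional $\psi\mapsto\int_{\mathbb{R}^{d+1}_+}\psi\,d\mu_\alpha$ is bounded on $L^1_\alpha(\mathbb{R}^{d+1}_+)$, and since $\tau^\alpha_x$ is linear, (\ref{ineqtransl}) gives $\left\|\tau^\alpha_x\varphi_n-\tau^\alpha_x\varphi\right\|_{\alpha,1}=\left\|\tau^\alpha_x(\varphi_n-\varphi)\right\|_{\alpha,1}\le\left\|\varphi_n-\varphi\right\|_{\alpha,1}\to 0$. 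Letting $n\to\infty$ in the identity already established for each $\varphi_n$ yields (\ref{integraltransrad}) in full generality. The only point requiring care is precisely that (\ref{MMM}) is stated only on $\mathcal{S}_*(\mathbb{R}^{d+1})$, which is what forces the density step; all remaining steps are routine. A purely computational alternative avoids the Weinstein transform altogether: after Fubini — legitimate because $\int_{\mathbb{R}^{d+1}_+}\tau^\alpha_x|\varphi|\,d\mu_\alpha<\infty$ by (\ref{ineqtransl}) — one uses translation invariance in the first $d$ variables and the substitution $z^2=x_{d+1}^2+y_{d+1}^2+2x_{d+1}y_{d+1}\cos\theta$, which rewrites the action of $\tau^\alpha_{x_{d+1}}$ in the last variable as integration against a kernel symmetric in its three arguments whose total mass equals $1$; this route is self-contained but heavier.
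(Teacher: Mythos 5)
Your proof is correct and follows essentially the same route as the paper, which simply cites the combination of (\ref{MMM}) with the evaluation of the Weinstein transform at the origin (where $\Lambda_\alpha^d(x,0)=1$). Your added density step to extend (\ref{MMM}) from $\mathcal{S}_*(\mathbb{R}^{d+1})$ to $L^1_\alpha(\mathbb{R}^{d+1}_+)$ is a sensible precaution that the paper's one-line proof glosses over, but it does not change the underlying argument.
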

  \begin{proof}
    The result comes from combination identities (\ref{inv}) and (\ref{MMM}).
  \end{proof}
  By using the generalized translation, we define the generalized convolution product $\varphi*\psi$ of the functions $\varphi,\;\psi\in L^1_\alpha(\mathbb{R}^{d+1}_+)$ as follows
\begin{equation}\label{defconvolution}
\varphi*\psi(x)=\int_{\mathbb{R}^{d+1}_+}\tau^\alpha_x\varphi(-y)\psi(y)d\mu_\alpha(y).
\end{equation}
\\
This convolution is commutative and associative, and it satisfies the following properties.

\begin{prop}\label{propconvol}
i) For all $\varphi,\psi\in L^1_\alpha(\mathbb{R}^{d+1}_+),$\;(resp. $\varphi,\psi\in \mathcal{S}_*(\mathbb{R}^{d+1})$), then $\varphi*\psi\in L^1_\alpha(\mathbb{R}^{d+1}_+),$\;(resp. $\varphi*\psi\in \mathcal{S}_*(\mathbb{R}^{d+1})$) and we have
\begin{equation}
\mathcal{F}_{W}(\varphi*\psi)=\mathcal{F}_{W}(\varphi)\mathcal{F}_{W}(\psi).
\end{equation}
ii) Let $p, q, r\in [1,\infty],$ such that $\frac{1}{p}+\frac{1}{q}-\frac{1}{r}=1.$ Then for all $\varphi\in L^p_\alpha(\mathbb{R}^{d+1}_+)$ and  $\psi\in L^q_\alpha(\mathbb{R}^{d+1}_+)$ the function $\varphi*\psi$ belongs to  $L^r_\alpha(\mathbb{R}^{d+1}_+)$ and we have
\begin{equation}\label{invconvol2}
\left\|\varphi*\psi\right\|_{\alpha,r}\leq\left\|\varphi\right\|_{\alpha,p}\left\|\psi\right\|_{\alpha,q}.
\end{equation}
iii)  Let $\varphi,\psi\in L^2_\alpha(\mathbb{R}^{d+1}_+)$. Then
\begin{equation}
\varphi*\psi=\mathcal{F}_{W}^{-1}\left(\mathcal{F}_{W}(\varphi)\mathcal{F}_{W}(\psi)\right).
\end{equation}
iv) Let $\varphi,\psi\in L^2_\alpha(\mathbb{R}^{d+1}_+)$. Then $\varphi*\psi$ belongs to $L^2_\alpha(\mathbb{R}^{d+1}_+)$ if and only if $\mathcal{F}_{W}(\varphi)\mathcal{F}_{W}(\psi)$ belongs to $L^2_\alpha(\mathbb{R}^{d+1}_+)$ and we have
\begin{equation}
\mathcal{F}_{W}(\varphi*\psi)=\mathcal{F}_{W}(\varphi)\mathcal{F}_{W}(\psi).
\end{equation}
v)  Let $\varphi,\psi\in L^2_\alpha(\mathbb{R}^{d+1}_+)$. Then
\begin{equation}
\|\varphi*\psi\|_{\alpha,2}=\|\mathcal{F}_{W}(\varphi)\mathcal{F}_{W}(\psi)\|_{\alpha,2},
\end{equation}
where both sides are finite or infinite.
\end{prop}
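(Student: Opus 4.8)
The plan is to establish the five assertions in the order (i)--(v), bootstrapping from $L^1_\alpha(\mathbb{R}^{d+1}_+)$ to $L^2_\alpha(\mathbb{R}^{d+1}_+)$ and using the Weinstein transform as the main tool throughout. For (i), to get $\varphi*\psi\in L^1_\alpha(\mathbb{R}^{d+1}_+)$ I would bound $\int_{\mathbb{R}^{d+1}_+}|\varphi*\psi(x)|\,d\mu_\alpha(x)$ by the double integral $\int_{\mathbb{R}^{d+1}_+}\int_{\mathbb{R}^{d+1}_+}|\tau^\alpha_x\varphi(-y)|\,|\psi(y)|\,d\mu_\alpha(y)\,d\mu_\alpha(x)$, interchange the integrals by Tonelli's theorem, and use the positivity of $\tau^\alpha_x$ together with the symmetry (\ref{symtrictrans}) and the invariance (\ref{integraltransrad}) to obtain $\int_{\mathbb{R}^{d+1}_+}|\tau^\alpha_x\varphi(-y)|\,d\mu_\alpha(x)\le\|\varphi\|_{\alpha,1}$ for each $y$, hence $\|\varphi*\psi\|_{\alpha,1}\le\|\varphi\|_{\alpha,1}\|\psi\|_{\alpha,1}$. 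For the transform identity I would substitute (\ref{defconvolution}) into the definition (\ref{defWeinstein}), apply Fubini's theorem (now legitimate since $|\Lambda^d_\alpha|\le1$ by (\ref{normLambda}) and the double integral is finite), and evaluate the resulting inner integral $\int_{\mathbb{R}^{d+1}_+}\tau^\alpha_x\varphi(-y)\Lambda^d_\alpha(x,\lambda)\,d\mu_\alpha(x)=\Lambda^d_\alpha(y,\lambda)\mathcal{F}_W(\varphi)(\lambda)$ using (\ref{MMM}) and the inversion formula (\ref{inv}); alternatively the identity is proved first for $\varphi,\psi\in\mathcal{S}_*(\mathbb{R}^{d+1})$ by direct computation and then extended to $L^1_\alpha(\mathbb{R}^{d+1}_+)$ by density. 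The Schwartz part of (i) then follows: $\mathcal{F}_W(\varphi*\psi)=\mathcal{F}_W(\varphi)\mathcal{F}_W(\psi)$ is a product of Schwartz functions, hence lies in $\mathcal{S}_*(\mathbb{R}^{d+1})$, and since $\mathcal{F}_W$ is an automorphism of $\mathcal{S}_*(\mathbb{R}^{d+1})$ we get $\varphi*\psi=\mathcal{F}_W^{-1}(\mathcal{F}_W(\varphi)\mathcal{F}_W(\psi))\in\mathcal{S}_*(\mathbb{R}^{d+1})$.

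For (ii), the endpoint $r=\infty$ (that is, $1/p+1/q=1$) is immediate from H\"older's inequality and the contraction estimate (\ref{ineqtransl}), giving $|\varphi*\psi(x)|\le\|\tau^\alpha_x\varphi\|_{\alpha,p}\|\psi\|_{\alpha,q}\le\|\varphi\|_{\alpha,p}\|\psi\|_{\alpha,q}$, while the endpoint $p=q=r=1$ is part (i). The general case $1/p+1/q-1/r=1$ then follows by Riesz--Thorin interpolation (fixing $\psi\in L^q_\alpha(\mathbb{R}^{d+1}_+)$), or, more elementarily, by the usual three-exponent H\"older argument applied to the factorization of $|\tau^\alpha_x\varphi(-y)|\,|\psi(y)|$, again using that $\tau^\alpha_x$ is a positive contraction on each $L^s_\alpha(\mathbb{R}^{d+1}_+)$.

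For (iii)--(v), note first that $\varphi,\psi\in L^2_\alpha(\mathbb{R}^{d+1}_+)$ gives $\mathcal{F}_W(\varphi),\mathcal{F}_W(\psi)\in L^2_\alpha(\mathbb{R}^{d+1}_+)$ by Plancherel (\ref{Plancherel formula}), so $g:=\mathcal{F}_W(\varphi)\mathcal{F}_W(\psi)\in L^1_\alpha(\mathbb{R}^{d+1}_+)$ by Cauchy--Schwarz and $\mathcal{F}_W^{-1}(g)$ is a bounded continuous function; meanwhile $\varphi*\psi$ is also bounded and continuous by the case $p=q=2$, $r=\infty$ of (ii). To prove (iii) I would pick $\varphi_n,\psi_n\in\mathcal{S}_*(\mathbb{R}^{d+1})$ with $\varphi_n\to\varphi$, $\psi_n\to\psi$ in $L^2_\alpha(\mathbb{R}^{d+1}_+)$; by (i), $\varphi_n*\psi_n=\mathcal{F}_W^{-1}(\mathcal{F}_W(\varphi_n)\mathcal{F}_W(\psi_n))$, and as $n\to\infty$ both sides converge uniformly --- the left side by the bilinear $L^2_\alpha\times L^2_\alpha\to L^\infty_\alpha$ estimate, the right side because $\mathcal{F}_W(\varphi_n)\mathcal{F}_W(\psi_n)\to g$ in $L^1_\alpha(\mathbb{R}^{d+1}_+)$ and $\mathcal{F}_W^{-1}$ maps $L^1_\alpha(\mathbb{R}^{d+1}_+)$ into $L^\infty_\alpha(\mathbb{R}^{d+1}_+)$ by (\ref{L1-Linfty}) and (\ref{inversionweinstein}) --- so $\varphi*\psi=\mathcal{F}_W^{-1}(g)$. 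For (iv): if $g\in L^2_\alpha(\mathbb{R}^{d+1}_+)$ then $\varphi*\psi=\mathcal{F}_W^{-1}(g)\in L^2_\alpha(\mathbb{R}^{d+1}_+)$ with $\mathcal{F}_W(\varphi*\psi)=g$ by Plancherel and the consistency of the $L^1$- and $L^2$-transforms; conversely, if $\varphi*\psi\in L^2_\alpha(\mathbb{R}^{d+1}_+)$, testing the identity $\varphi*\psi=\mathcal{F}_W^{-1}(g)$ against an arbitrary $\phi\in\mathcal{S}_*(\mathbb{R}^{d+1})$ by Fubini's theorem and the $\mathcal{S}_*$-inversion shows $\mathcal{F}_W(\varphi*\psi)=g$ in the distributional sense, and then Plancherel applied to $\varphi*\psi$ forces $g\in L^2_\alpha(\mathbb{R}^{d+1}_+)$. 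Finally (v) is immediate: if $\varphi*\psi\in L^2_\alpha(\mathbb{R}^{d+1}_+)$ then $\|\varphi*\psi\|_{\alpha,2}=\|\mathcal{F}_W(\varphi*\psi)\|_{\alpha,2}=\|g\|_{\alpha,2}$ by Plancherel, and otherwise $g\notin L^2_\alpha(\mathbb{R}^{d+1}_+)$ by (iv), so both sides equal $+\infty$.

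The step I expect to be the main obstacle is the convolution--product identity of (i) at the level of $L^1_\alpha(\mathbb{R}^{d+1}_+)$ functions: justifying the interchange of integrals and, above all, correctly evaluating $\int_{\mathbb{R}^{d+1}_+}\tau^\alpha_x\varphi(-y)\Lambda^d_\alpha(x,\lambda)\,d\mu_\alpha(x)=\Lambda^d_\alpha(y,\lambda)\mathcal{F}_W(\varphi)(\lambda)$, where the eigenfunction property (\ref{MMM}) of the generalized translation and the inversion formula (\ref{inv}) have to be combined carefully; proving the identity first on $\mathcal{S}_*(\mathbb{R}^{d+1})$ and then extending by the $L^1$-continuity of both sides is the cleanest way around this. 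Everything else amounts to routine Fubini/H\"older/Plancherel bookkeeping.
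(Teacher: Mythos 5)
The paper gives no proof of Proposition \ref{propconvol} at all: it is stated as a known package of facts about the Weinstein convolution, imported from the references cited just above for the basic harmonic analysis of $\mathcal{F}_{W}$ (Ben Nahia--Ben Salem). So there is no in-paper argument to compare yours against; judged on its own, your proof is the standard one and is essentially complete. Parts (ii)--(v) are correct as you describe them: Young's inequality from the endpoint cases plus interpolation (or the three-exponent H\"older argument) using the contraction property (\ref{ineqtransl}); (iii) by density from $\mathcal{S}_*(\mathbb{R}^{d+1})$, with uniform convergence of $\varphi_n*\psi_n$ from the bilinear $L^2_\alpha\times L^2_\alpha\to L^\infty_\alpha$ bound and of $\mathcal{F}_W^{-1}(\mathcal{F}_W(\varphi_n)\mathcal{F}_W(\psi_n))$ from $\|\mathcal{F}_W^{-1}h\|_{\alpha,\infty}\le\|h\|_{\alpha,1}$ --- and you are right that (iii) must be read with $\mathcal{F}_W^{-1}$ in the $L^1_\alpha$ sense, since $\mathcal{F}_{W}(\varphi)\mathcal{F}_{W}(\psi)$ is a priori only integrable; (iv) and (v) then follow by Parseval/Plancherel exactly as you say. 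The one genuine subtlety is the one you flag in (i): reducing the inner integral to $\mathcal{F}_{W}(\tau^\alpha_{-y}\varphi)(\lambda)=\Lambda_{\alpha}^d(-y,\lambda)\mathcal{F}_{W}(\varphi)(\lambda)$ via (\ref{symtrictrans}) and (\ref{MMM}) is right, but with the kernel (\ref{wkernel}) one has $\Lambda_{\alpha}^d(-y,\lambda)=\Lambda_{\alpha}^d(y,-\lambda)$ rather than $\Lambda_{\alpha}^d(y,\lambda)$, so a literal application of the paper's printed conventions for (\ref{defWeinstein}), (\ref{MMM}) and (\ref{defconvolution}) yields $\mathcal{F}_{W}(\varphi)(\lambda)\mathcal{F}_{W}(\psi)(-\lambda)$. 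That is a sign-bookkeeping defect in the stated definitions, not in your argument; with any internally consistent convention the derivation closes, and your fallback --- prove the product formula on $\mathcal{S}_*(\mathbb{R}^{d+1})$ and extend by $L^1_\alpha$-continuity of both sides --- is exactly how the cited sources handle it.
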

\section{Weinstein two-wavelet theory}

In the following, we denote by \\
$\X=\left\{(a,x): x\in \mathbb{R}^{d+1}_+ \;\text{and}\; a>0\right\}$.\\
$L^p_{\alpha}(\X),\; p\in [1,\infty]$ the space of measurable functions $\varphi$ on $\X$ such that
\begin{eqnarray*}
  \|\varphi\|_{L^p_{\alpha}(\X)} &:=& \left(\int_{\X}|\varphi(a,x)|^p  d\mu_\alpha(a,x)\right)^\frac{1}{p}<\infty,\quad 1\leq p<\infty, \\
  \|\varphi\|_{L^\infty_{\alpha}(\X)}&=& \esssup_{(a,x)\in\X}|\varphi(a,x)|<\infty,
\end{eqnarray*}
where the measure $\mu_\alpha(a,x)$ is defined on $\X$ by
$$d\mu_\alpha(a,x)=\frac{d\mu_\alpha(x)da}{a^{2\alpha+d+3}}$$
\begin{defn}\cite{gasmi2016inversion}
 A classical wavelet  on $\mathbb{R}^{d+1}_+$ is a measurable function $\varphi$ on $\mathbb{R}^{d+1}_+$
satisfying for almost all $\xi\in \mathbb{R}^{d+1}_+$, the condition
\begin{equation}\label{defwave}
  0<C_\varphi=\int_{0}^{\infty}|\mathcal{F}_{W}(\varphi)(a\xi)|^2\frac{da}{a}<\infty.
\end{equation}
\end{defn}
We extend the notion of the wavelet to the  two-wavelet in Weinstein setting as follows
\begin{defn}
  Let $\varphi$ and $\psi$ be in $L^2_{\alpha}(\mathbb{R}^{d+1}_+)$. We say that the pair $(\varphi,\psi)$ is a Weinstein
two-wavelet on $\mathbb{R}^{d+1}_+$ if the following integral
\begin{equation}\label{deftwowave}
  C_{\varphi,\psi}=\int_{0}^{\infty}\mathcal{F}_{W}(\psi)(a\xi)\overline{\mathcal{F}_{W}(\varphi)}
  (a\xi)\frac{da}{a}
\end{equation}
is constant for almost all  $\xi\in \mathbb{R}^{d+1}_+$ and we call the number $ C_{\varphi,\psi}$ the Weinstein two-wavelet constant associated to the functions $\varphi$ and $\psi$.
\end{defn}
It is to highlight that if $u$ is a Weinstein  wavelet then the pair $(\varphi,\psi)$ is a Weinstein two-wavelet,
and $C_{\varphi,\psi}$ coincides with   $ C_{\varphi}$.

Let $a>0$ and $\varphi$ be a measurable function. We consider the function $\varphi_a$ defined by
\begin{equation}\label{fia}
  \forall x\in \mathbb{R}^{d+1}_+, \quad \varphi_a(x)=\frac{1}{a^{2\alpha+d+2}}\varphi\left(\frac{x}{a}\right).
\end{equation}
\begin{prop}
  \begin{enumerate}
    \item Let $a>0$ and $\varphi\in L^p_{\alpha}(\mathbb{R}^{d+1}_+),\;p\in[1,\infty]$. The function $\varphi_a$ belongs to $L^p_{\alpha}(\mathbb{R}^{d+1}_+)$ and we have
        \begin{equation}\label{normLpfia}
           \|\varphi_a\|_{\alpha,p}=a^{(2\alpha+d+2)(\frac{1}{p}-1)} \|\varphi\|_{\alpha,p}.
        \end{equation}
    \item Let $a>0$ and $\varphi\in L^1_{\alpha}(\mathbb{R}^{d+1}_+)\cup L^2_{\alpha}(\mathbb{R}^{d+1}_+)$. Then, we have
        \begin{equation}\label{fourierfia}
          \mathcal{F}_{W}(\varphi_a)(\xi)=\mathcal{F}_{W}(\varphi)(a\xi),\quad\xi\in \mathbb{R}^{d+1}_+.
        \end{equation}
  \end{enumerate}
\end{prop}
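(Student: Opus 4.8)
The plan is to obtain both statements from the single change of variables $x\mapsto ay$, using nothing more than the explicit form of the measure $d\mu_\alpha$ (whose density is proportional to $x_{d+1}^{2\alpha+1}$) and the explicit form (\ref{wkernel}) of the Weinstein kernel. First I would record how $d\mu_\alpha$ behaves under dilation: substituting $x=ay$, so that $dx=a^{d+1}\,dy$ and $x_{d+1}=ay_{d+1}$, one obtains $\int_{\mathbb{R}^{d+1}_+}F(x/a)\,d\mu_\alpha(x)=a^{2\alpha+d+2}\int_{\mathbb{R}^{d+1}_+}F(y)\,d\mu_\alpha(y)$ for every nonnegative measurable $F$, the exponent $2\alpha+d+2$ being $2\alpha+1$ (from $x_{d+1}^{2\alpha+1}$) plus $d+1$ (from the Lebesgue Jacobian).

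For the first item I would apply this with $F=|\varphi|^p$ when $1\le p<\infty$: since $\varphi_a(x)=a^{-(2\alpha+d+2)}\varphi(x/a)$ by (\ref{fia}), this gives $\|\varphi_a\|_{\alpha,p}^p=a^{-(2\alpha+d+2)p}\,a^{2\alpha+d+2}\,\|\varphi\|_{\alpha,p}^p=a^{(2\alpha+d+2)(1-p)}\|\varphi\|_{\alpha,p}^p$, and taking $p$-th roots yields (\ref{normLpfia}) together with the membership $\varphi_a\in L^p_\alpha(\mathbb{R}^{d+1}_+)$. The case $p=\infty$ is immediate, $\|\varphi_a\|_{\alpha,\infty}=a^{-(2\alpha+d+2)}\|\varphi\|_{\alpha,\infty}$, which is exactly (\ref{normLpfia}) read with the convention $1/\infty=0$.

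For the second item I would first note the homogeneity $\Lambda_\alpha^d(ay,\xi)=\Lambda_\alpha^d(y,a\xi)$, which is read off (\ref{wkernel}) at once since $e^{-i\langle ay',\xi'\rangle}=e^{-i\langle y',a\xi'\rangle}$ and $ay_{d+1}\xi_{d+1}=y_{d+1}(a\xi_{d+1})$. If $\varphi\in L^1_\alpha(\mathbb{R}^{d+1}_+)$, then $\varphi_a\in L^1_\alpha(\mathbb{R}^{d+1}_+)$ by the first item and $|\Lambda_\alpha^d|\le 1$ (see (\ref{normLambda})) makes the defining integral (\ref{defWeinstein}) absolutely convergent; plugging $\varphi_a$ into (\ref{defWeinstein}), substituting $x=ay$, and combining the two scaling facts, the normalisation factor $a^{-(2\alpha+d+2)}$ of $\varphi_a$ cancels exactly the Jacobian factor $a^{2\alpha+d+2}$ of the measure while $\Lambda_\alpha^d(ay,\xi)$ turns into $\Lambda_\alpha^d(y,a\xi)$, leaving precisely $\mathcal{F}_{W}(\varphi)(a\xi)$. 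When $\varphi\in L^2_\alpha(\mathbb{R}^{d+1}_+)$ the integral formula (\ref{defWeinstein}) is no longer literally available, so I would approximate: pick $\varphi_n\in\mathcal{S}_*(\mathbb{R}^{d+1})$ with $\varphi_n\to\varphi$ in $L^2_\alpha(\mathbb{R}^{d+1}_+)$, observe that the first item gives $(\varphi_n)_a\to\varphi_a$ in $L^2_\alpha(\mathbb{R}^{d+1}_+)$ and that the dilation $g\mapsto g(a\cdot)$ is bounded on $L^2_\alpha(\mathbb{R}^{d+1}_+)$, and pass to the limit in the identity $\mathcal{F}_{W}((\varphi_n)_a)(\xi)=\mathcal{F}_{W}(\varphi_n)(a\xi)$ (valid on $\mathcal{S}_*(\mathbb{R}^{d+1})$) using the $L^2_\alpha$-continuity of $\mathcal{F}_{W}$ furnished by Plancherel (\ref{Plancherel formula}); this yields (\ref{fourierfia}) as an identity in $L^2_\alpha(\mathbb{R}^{d+1}_+)$, hence for almost every $\xi$.

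None of these steps is genuinely difficult: once the $a$-exponent of the dilated measure is identified as $2\alpha+d+2$, everything reduces to a routine change of variables, and the key structural point is simply that this exponent is the same one used to normalise $\varphi_a$ in (\ref{fia}), which is exactly why $\mathcal{F}_{W}(\varphi_a)$ comes out as a clean dilate of $\mathcal{F}_{W}(\varphi)$. The only place that calls for a little care is the $L^2$ part of the second item, where (\ref{defWeinstein}) cannot be manipulated pointwise and one has to route the argument through density in $\mathcal{S}_*(\mathbb{R}^{d+1})$ and the Plancherel extension of $\mathcal{F}_{W}$.
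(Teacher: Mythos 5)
Your argument is correct: the dilation factor $a^{2\alpha+d+2}$ of the measure $d\mu_\alpha$ under $x\mapsto ay$, the homogeneity $\Lambda_\alpha^d(ay,\xi)=\Lambda_\alpha^d(y,a\xi)$ read off (\ref{wkernel}), and the density/Plancherel argument for the $L^2$ case are exactly what is needed, and the exponents check out. The paper states this proposition without proof, treating it as routine, so there is no argument to compare against; yours is the standard one and fills that gap cleanly.
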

For $a>0$ and $\varphi\in L^2_{\alpha}(\mathbb{R}^{d+1}_+)$, we consider the family $\varphi_{a,x},\; x\in
\mathbb{R}^{d+1}_+$ of Weinstein wavelets on $\mathbb{R}^{d+1}_+$ in $L^2_{\alpha}(\mathbb{R}^{d+1}_+)$ defined by
\begin{equation}\label{deffiax}
  \forall y\in \mathbb{R}^{d+1}_+,\quad \varphi_{a,x}=a^{\alpha+1+\frac{d}{2}}\tau^\alpha_x\varphi_a(y).
\end{equation}
\begin{rem}
  \begin{enumerate}
    \item   Let $\varphi$ be a function in $L^2_{\alpha}(\mathbb{R}^{d+1}_+)$, then we have
    \begin{equation}\label{Nfiax2}
      \forall (a,x)\in\X, \quad \|\varphi_{a,x}\|_{\alpha,2}\leq  \|\varphi\|_{\alpha,2}.
    \end{equation}
    \item  Let $p\in [1,\infty]$ and $\varphi$ be a function in $L^p_{\alpha}(\mathbb{R}^{d+1}_+)$, then we have
    \begin{equation}\label{Nfiaxp}
      \forall (a,x)\in\X, \quad \|\varphi_{a,x}\|_{\alpha,p}\leq a^{(2\alpha+d+2)(\frac{1}{p}-\frac{1}{2})} \|\varphi\|_{\alpha,p}.
    \end{equation}
  \end{enumerate}
\end{rem}
\begin{defn}\cite{mejjaoli2017new}
  Let $\varphi$ be a Weinstein wavelet on $\mathbb{R}^{d+1}_+$ in $L^2_{\alpha}(\mathbb{R}^{d+1}_+)$. The Weinstein continuous wavelet transform $\Phi^W_\varphi$ on $\mathbb{R}^{d+1}_+$ is defined for regular functions $f$ on $\mathbb{R}^{d+1}_+$ by
  \begin{equation}\label{contwave}
    \forall (a,x)\in\X,\quad \Phi^W_\varphi(f)(a,x)=\int_{\mathbb{R}^{d+1}_+}f(y)\overline{\varphi_{a,x}(y)} d\mu_\alpha(y)=\langle f, \varphi_{a,x}\rangle_{\alpha,2}.
  \end{equation}
\end{defn}
This transform can also be written in the form
\begin{equation}\label{recontwave}
  \Phi^W_\varphi(f)(a,x)=a^{\alpha+1+\frac{d}{2}}\check{f}*\overline{\varphi_a}(x).
\end{equation}
\begin{rem}
  \begin{enumerate}
    \item Let $\varphi$ be a function in $L^p_{\alpha}(\mathbb{R}^{d+1}_+)$, and let $f$ be a function in $L^q_{\alpha}(\mathbb{R}^{d+1}_+)$, with $p\in [1,\infty]$, we define the Weinstein continuous wavelet transform $\Phi^W_\varphi(f)$ by the relation (\ref{recontwave}).
    \item Let $\varphi$ be a Weinstein wavelet on $\mathbb{R}^{d+1}_+$ in $L^2_{\alpha}(\mathbb{R}^{d+1}_+)$. Then from the relations (\ref{Nfiax2}) and (\ref{contwave}), we have for all $f\in L^2_{\alpha}(\mathbb{R}^{d+1}_+)$
        \begin{equation}\label{Nwcont2}
           \|\Phi^W_\varphi(f)\|_{\alpha,\infty}\leq \|f\|_{\alpha,2}\|\varphi\|_{\alpha,2}.
        \end{equation}
    \item Let $\varphi$ be a function in $L^p_{\alpha}(\mathbb{R}^{d+1}_+)$, with $p\in [1,\infty]$, then from the inequality (\ref{invconvol2}) and the identity (\ref{recontwave}),  we have for all $f\in L^q_{\alpha}(\mathbb{R}^{d+1}_+)$
        \begin{equation}\label{Nwcontp}
           \|\Phi^W_\varphi(f)\|_{\alpha,\infty}\leq \|f\|_{\alpha,q}\|\varphi\|_{\alpha,p}.
        \end{equation}
  \end{enumerate}
\end{rem}
The following Theorem generalizes the Parseval's formula for the continuous
Weinstein wavelet transform proved by Mejjaoli \cite{mejjaoli2017new}.
\begin{thm}\label{Parseval's formula2wave}
  Let $(\varphi,\psi)$ be a Weinstein two-wavelet. Then for all $f$ and $g$ in $L^2_{\alpha}(\mathbb{R}^{d+1}_+)$, we have the following  Parseval type formula
  \begin{equation}\label{ParsevalPsi}
    \int_{\X}\Phi^W_\varphi(f)(a,x)\overline{\Phi^W_\psi(g)(a,x)}d\mu_\alpha(a,x)=C_{\varphi,\psi}\int_{\mathbb{R}^{d+1}_+}f(x)\overline{g(x)} d\mu_\alpha(x),
  \end{equation}
  where $ C_{\varphi,\psi}$ is the Weinstein two-wavelet constant associated to the functions $\varphi$ and $\psi$ given by the identity (\ref{deftwowave}).
\end{thm}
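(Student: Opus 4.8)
The plan is to transfer the whole computation to the Weinstein-transform side in the space variable $x$, exploit the convolution representation (\ref{recontwave}) of $\Phi^W_\varphi$, and watch the $a$-integral reassemble into the two-wavelet constant $C_{\varphi,\psi}$. Recall from (\ref{recontwave}) that for fixed $a>0$ we have $\Phi^W_\varphi(f)(a,\cdot)=a^{\alpha+1+\frac d2}\,\check f*\overline{\varphi_a}$ (with $\check f=\widetilde f$, $\check f(y)=f(-y)$). Applying $\mathcal F_W$, turning the convolution into a product via Proposition \ref{propconvol}, handling $\check f$ and the conjugation through (\ref{fourierbar})–(\ref{fourierbar1}), and using the dilation rule (\ref{fourierfia}), I get
$$\mathcal F_W\!\big(\Phi^W_\varphi(f)(a,\cdot)\big)(\xi)=a^{\alpha+1+\frac d2}\,\mathcal F_W(f)(-\xi)\,\overline{\mathcal F_W(\varphi)(-a\xi)},$$
and the analogous identity with $(\psi,g)$ in place of $(\varphi,f)$.

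Next I would feed this into Parseval's formula (\ref{MM}) for the inner ($x$-)integral: for fixed $a$,
$$\int_{\mathbb R^{d+1}_+}\!\!\Phi^W_\varphi(f)(a,x)\overline{\Phi^W_\psi(g)(a,x)}\,d\mu_\alpha(x)=a^{2\alpha+d+2}\!\!\int_{\mathbb R^{d+1}_+}\!\!\mathcal F_W(f)(-\xi)\overline{\mathcal F_W(g)(-\xi)}\;\overline{\mathcal F_W(\varphi)(-a\xi)}\,\mathcal F_W(\psi)(-a\xi)\,d\mu_\alpha(\xi).$$
Dividing by $a^{2\alpha+d+3}$ and integrating over $a\in(0,\infty)$ collapses the power of $a$ to $da/a$. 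By Fubini I carry out the $a$-integration first; since $\mathcal F_W(f)(-\xi)\overline{\mathcal F_W(g)(-\xi)}$ does not depend on $a$, and writing $\eta=-\xi\in\mathbb R^{d+1}_+$ so that $-a\xi=a\eta$, the remaining $a$-integral is exactly $\int_0^\infty \mathcal F_W(\psi)(a\eta)\overline{\mathcal F_W(\varphi)(a\eta)}\,\frac{da}{a}=C_{\varphi,\psi}$ by the defining relation (\ref{deftwowave}), for almost every $\xi$. Hence
$$\int_{\X}\Phi^W_\varphi(f)(a,x)\overline{\Phi^W_\psi(g)(a,x)}\,d\mu_\alpha(a,x)=C_{\varphi,\psi}\int_{\mathbb R^{d+1}_+}\mathcal F_W(f)(-\xi)\overline{\mathcal F_W(g)(-\xi)}\,d\mu_\alpha(\xi).$$
Finally, the reflection $\xi\mapsto-\xi=(-\xi',\xi_{d+1})$ preserves $d\mu_\alpha$, so the right-hand side equals $C_{\varphi,\psi}\int_{\mathbb R^{d+1}_+}\mathcal F_W(f)(\xi)\overline{\mathcal F_W(g)(\xi)}\,d\mu_\alpha(\xi)$, and one more application of (\ref{MM}) turns it into $C_{\varphi,\psi}\int_{\mathbb R^{d+1}_+} f\,\overline g\,d\mu_\alpha$, which is (\ref{ParsevalPsi}).

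The main obstacle is the legitimacy of the two interchanges in the middle step — the use of (\ref{MM}) in $x$ for each fixed $a$ (note that $\Phi^W_\varphi(f)(a,\cdot)$ need not lie in $L^2_\alpha(\mathbb R^{d+1}_+)$ for an individual value of $a$) and Fubini in $(a,\xi)$ — because the double integral over $\X$ is in general only conditionally convergent. I would handle this in the usual way: first prove (\ref{ParsevalPsi}) for $f,g$ ranging over the dense subspace of $L^2_\alpha(\mathbb R^{d+1}_+)$ whose Weinstein transforms are bounded and compactly supported in $\mathbb R^{d+1}_+$, where every integrand is dominated by an integrable function so that Tonelli/Fubini and (\ref{MM}) apply verbatim (here $\mathcal F_W(f)(-\xi)\overline{\mathcal F_W(g)(-\xi)}$ is bounded with compact support, which localizes the $a$-integral away from its endpoints), and then extend to arbitrary $f,g\in L^2_\alpha(\mathbb R^{d+1}_+)$ by a density and continuity argument (taking $g=f$, $\psi=\varphi$ along the way to obtain the Plancherel identity $\|\Phi^W_\varphi f\|_{L^2_\alpha(\X)}^2=C_\varphi\|f\|_{\alpha,2}^2$, hence the needed $L^2$-boundedness, followed by polarization). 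Once this bookkeeping is in place, the identity follows from the chain of equalities above; that computation is the heart of the matter and everything else is routine.
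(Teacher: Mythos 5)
Your proof is correct and follows essentially the same route as the paper's: rewrite $\Phi^W_\varphi(f)(a,\cdot)$ as a convolution via (\ref{recontwave}), pass to the Weinstein transform, apply Parseval's formula (\ref{MM}) in the $x$-variable, interchange the $a$- and $\xi$-integrations so that the inner $a$-integral reassembles into $C_{\varphi,\psi}$, and finish with one more application of (\ref{MM}). The only substantive difference is that you explicitly address the legitimacy of the Fubini interchange and of applying Parseval for each fixed $a$ via a density-and-polarization argument, a point the paper's proof passes over in silence.
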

\begin{proof}
  According to Fubini's Theorem, the relation (\ref{recontwave}) and Parseval's formula for the Weinstein transform (\ref{MM}), we obtain
  \begin{eqnarray*}
 &&\!\!\!\!\!\!\!\!\!\!\!\!\!\!\!\!   \int_{\X}\Phi^W_\varphi(f)(a,x)\overline{\Phi^W_\psi(g)(a,x)}d\mu_\alpha(a,x)\\ &=&   \int_{0}^{\infty}\int_{\mathbb{R}^{d+1}_+}\check{f}*\overline{\varphi_a}(x)\overline{\check{g}*\overline{\psi_a}(x)}
 a^{2\alpha+d+2}d\mu_\alpha(a,x)\\ &=&
  \int_{0}^{\infty}\int_{\mathbb{R}^{d+1}_+}\check{f}*\overline{\varphi_a}(x)\overline{\check{g}*\overline{\psi_a}(x)}
 d\mu_\alpha(x)\frac{da}{a}\\ &=&
  \int_{0}^{\infty}\int_{\mathbb{R}^{d+1}_+}\mathcal{F}_{W}(\check{f})(\xi)\overline{\mathcal{F}_{W}(\check{g})(\xi)}
  \mathcal{F}_{W}(\overline{\varphi_a})(\xi)\overline{\mathcal{F}_{W}(\overline{\psi_a})(\xi)}d\mu_\alpha(\xi)\frac{da}{a}\\ &=&
   \int_{\mathbb{R}^{d+1}_+}\mathcal{F}_{W}(\check{f})(\xi)\overline{\mathcal{F}_{W}(\check{g})(\xi)}
  \left(\int_{0}^{\infty}\mathcal{F}_{W}(\overline{\varphi})(a\xi)\mathcal{F}_{W}(\psi)(-a\xi)\frac{da}{a}\right) d\mu_\alpha(\xi).
   \end{eqnarray*}
   Moreover, using the relations (\ref{fourierbar}) and (\ref{fourierbar1}), we conclude that
   \begin{eqnarray*}
 &&\!\!\!\!\!\!\!\!\!\!\!\!\!\!\!\!   \int_{\X}\Phi^W_\varphi(f)(a,x)\overline{\Phi^W_\psi(g)(a,x)}d\mu_\alpha(a,x)\\ &=&
  \int_{\mathbb{R}^{d+1}_+}\mathcal{F}_{W}(f)(\xi)\overline{\mathcal{F}_{W}(g)(\xi)}
  \left(\int_{0}^{\infty}\overline{\mathcal{F}_{W}(\varphi)(a\xi)}\mathcal{F}_{W}(\psi)(a\xi)\frac{da}{a}\right) d\mu_\alpha(\xi)\\ &=&  C_{\varphi,\psi}\int_{\mathbb{R}^{d+1}_+}\mathcal{F}_{W}(f)(\xi)\overline{\mathcal{F}_{W}(g)(\xi)}d\mu_\alpha(\xi).
 \end{eqnarray*}
 Finally, we get the desired result using the  Parseval's formula for the Weinstein transform (\ref{MM}).
\end{proof}
In the particular case of the previous theorem when $\varphi=\psi$  and $f=g$, we obtain the following Plancherel’s formula  for the Weinstein continuous wavelet transform provided in \cite{mejjaoli2017new}
\begin{equation}\label{ParsevalPsiu=vf=g}
    \int_{\X}\left|\Phi^W_\varphi(f)(a,x)\right|^2d\mu_\alpha(a,x)=C_{\varphi}\int_{\mathbb{R}^{d+1}_+}\left|f(x)\right|^2 d\mu_\alpha(x),
  \end{equation}
  where
  \begin{equation}\label{deftwowaveu=v}
 C_{\varphi}=C_{\varphi,\varphi}=\int_{0}^{\infty}\left|\mathcal{F}_{W}(\varphi)(a\xi)\right|^2\frac{da}{a}.
\end{equation}
From the Parseval type formula in Theorem \ref{Parseval's formula2wave}, we deduce the following orthogonality result.
\begin{cor}
  Let $(\varphi,\psi)$ be a Weinstein two-wavelet. Then we have the following assertion:
 If the Weinstein two-wavelet constant  $C_{\varphi,\varphi}=0$, then $\Phi^W_\varphi\left(L^2_{\alpha}(\mathbb{R}^{d+1}_+)\right)$ and $\Phi^W_\psi\left(L^2_{\alpha}(\mathbb{R}^{d+1}_+)\right)$ are orthogonal.
\end{cor}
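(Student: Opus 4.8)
The plan is to derive the vanishing of the two‑wavelet constant $C_{\varphi,\psi}$ from the hypothesis and then invoke the Parseval type formula of Theorem~\ref{Parseval's formula2wave}. First I would unwind what $C_{\varphi,\varphi}=0$ means: by \eqref{deftwowaveu=v}, for almost every $\xi\in\mathbb{R}^{d+1}_+$ we have $\int_0^\infty|\mathcal{F}_{W}(\varphi)(a\xi)|^2\frac{da}{a}=0$, so for each such $\xi$ the nonnegative integrand vanishes for almost every $a>0$; a routine Fubini--Tonelli argument then gives $\mathcal{F}_{W}(\varphi)(a\xi)=0$ for a.e. $(a,\xi)\in(0,\infty)\times\mathbb{R}^{d+1}_+$.

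Next I would feed this into the defining integral \eqref{deftwowave}: for a.e. $\xi$ the integrand $\mathcal{F}_{W}(\psi)(a\xi)\overline{\mathcal{F}_{W}(\varphi)(a\xi)}$ is zero for a.e. $a>0$, hence $C_{\varphi,\psi}=0$. (Alternatively one may apply the Cauchy--Schwarz inequality to \eqref{deftwowave} to get $|C_{\varphi,\psi}|^2\le C_{\varphi,\varphi}\,C_{\psi,\psi}$ pointwise in $\xi$, but the direct argument avoids having to assume $C_{\psi,\psi}<\infty$.)

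Finally I would apply Theorem~\ref{Parseval's formula2wave}: for all $f,g\in L^2_{\alpha}(\mathbb{R}^{d+1}_+)$,
\begin{equation*}
\int_{\X}\Phi^W_\varphi(f)(a,x)\overline{\Phi^W_\psi(g)(a,x)}\,d\mu_\alpha(a,x)=C_{\varphi,\psi}\int_{\mathbb{R}^{d+1}_+}f(x)\overline{g(x)}\,d\mu_\alpha(x)=0.
\end{equation*}
Since $\Phi^W_\varphi(f)$ and $\Phi^W_\psi(g)$ range over all of $\Phi^W_\varphi(L^2_{\alpha}(\mathbb{R}^{d+1}_+))$ and $\Phi^W_\psi(L^2_{\alpha}(\mathbb{R}^{d+1}_+))$ as $f,g$ vary (and both belong to $L^2_\alpha(\X)$ by the Plancherel formula \eqref{ParsevalPsiu=vf=g}), this says precisely that the two subspaces are orthogonal in $L^2_\alpha(\X)$.

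There is essentially no serious obstacle; the only point requiring a little care is the measure‑theoretic step translating ``$C_{\varphi,\varphi}(\xi)=0$ for a.e. $\xi$'' into the vanishing of the integrand defining $C_{\varphi,\psi}(\xi)$. One could even shortcut the statement by observing that $C_{\varphi,\varphi}=0$ already forces $\Phi^W_\varphi(f)=0$ in $L^2_\alpha(\X)$ for every $f$ via \eqref{ParsevalPsiu=vf=g}, so $\Phi^W_\varphi(L^2_{\alpha}(\mathbb{R}^{d+1}_+))=\{0\}$; but the route through $C_{\varphi,\psi}=0$ and Theorem~\ref{Parseval's formula2wave} is the one consistent with the paper's narrative.
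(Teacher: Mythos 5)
Your proof is correct and takes essentially the same route as the paper, which deduces the corollary directly from the Parseval type formula of Theorem~\ref{Parseval's formula2wave} with $C_{\varphi,\psi}=0$. Your preliminary step converting the stated hypothesis $C_{\varphi,\varphi}=0$ into $C_{\varphi,\psi}=0$ is sound and sensibly handles what is almost certainly a typo for $C_{\varphi,\psi}=0$ (the same slip appears in assumption $(A_2)$); as you observe, the literal hypothesis actually forces $\mathcal{F}_{W}(\varphi)=0$ a.e., hence $\varphi=0$ and $\Phi^W_\varphi\left(L^2_{\alpha}(\mathbb{R}^{d+1}_+)\right)=\{0\}$, so the intended, non-degenerate statement is the one with $C_{\varphi,\psi}=0$.
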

\begin{thm}(Inversion formula) Let $(\varphi,\psi)$ be a Weinstein two-wavelet. For all $f\in L^1_{\alpha}(\mathbb{R}^{d+1}_+)$ (resp. $L^2_{\alpha}(\mathbb{R}^{d+1}_+)$) such that $\mathcal{F}_{W}(f)$  belongs to $f\in L^1_{\alpha}(\mathbb{R}^{d+1}_+)$
  (resp. $L^1_{\alpha}(\mathbb{R}^{d+1}_+)$ $\cap L^\infty_{\alpha}(\mathbb{R}^{d+1}_+)$), we have
  \begin{equation}\label{inversion2wave}
    f(y)=\frac{1}{C_{\varphi,\psi}}\int_{0}^{\infty}\int_{\mathbb{R}^{d+1}_+}\Phi^W_\varphi(f)(a,x)\psi_{a,x}(y)\mu_\alpha(a,x),\quad a.e.
  \end{equation}
where for each $y\in\mathbb{R}^{d+1}_+ $\, both the inner integral and the outer integral are absolutely
convergent, but eventually not the double integral.
\end{thm}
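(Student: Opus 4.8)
The plan is to pass to the Weinstein transform side: compute $\mathcal{F}_W$ of the inner integral, integrate it against $da/a^{2\alpha+d+3}$, recognise the constant $C_{\varphi,\psi}$, and then invert $\mathcal{F}_W$. The integrability assumptions on $\mathcal{F}_W(f)$ are exactly what makes the two required interchanges of integration legitimate; the full double integral is not absolutely convergent, so Fubini cannot be applied to it at once.

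First, fix $a>0$ and set $J_a(y)=\int_{\mathbb{R}^{d+1}_+}\Phi^W_\varphi(f)(a,x)\psi_{a,x}(y)\,d\mu_\alpha(x)$. Writing $\Phi^W_\varphi(f)(a,x)=\langle\mathcal{F}_W(f),\mathcal{F}_W(\varphi_{a,x})\rangle_{\alpha,2}$ by Parseval (\ref{MM}) and using (\ref{MMM}) and (\ref{fourierfia}) yields $\mathcal{F}_W\bigl(\Phi^W_\varphi(f)(a,\cdot)\bigr)(\xi)=a^{\alpha+1+\frac d2}\,\mathcal{F}_W(f)(\xi)\,\overline{\mathcal{F}_W(\varphi)(a\xi)}$, whence by Plancherel (\ref{Plancherel formula}), (\ref{fourierfia}) and (\ref{normLpfia}),
\[
\|\Phi^W_\varphi(f)(a,\cdot)\|_{\alpha,2}\le\|\mathcal{F}_W(f)\|_{\alpha,\infty}\,\|\varphi\|_{\alpha,2}<\infty,
\]
where finiteness is automatic from (\ref{L1-Linfty}) in the case $f\in L^1_\alpha$ and is a hypothesis in the case $f\in L^2_\alpha$. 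Since also $\|\psi_{a,\cdot}(y)\|_{L^2(d\mu_\alpha(x))}=a^{\alpha+1+\frac d2}\|\tau^\alpha_y\psi_a\|_{\alpha,2}\le\|\psi\|_{\alpha,2}$ by (\ref{symtrictrans}), (\ref{ineqtransl}) and (\ref{normLpfia}), Cauchy–Schwarz in $x$ shows that the integral defining $J_a(y)$ is absolutely convergent for every $(a,y)$. Next, the elementary identity $\int_{\mathbb{R}^{d+1}_+}g(x)\,\tau^\alpha_x h(y)\,d\mu_\alpha(x)=(g*h)(y)$, valid for $g,h\in L^2_\alpha(\mathbb{R}^{d+1}_+)$ (it holds for Schwartz $g,h$ by comparing Weinstein transforms via (\ref{MMM}), and extends by density), together with $\psi_{a,x}(y)=a^{\alpha+1+\frac d2}\tau^\alpha_x\psi_a(y)$, gives $J_a=a^{\alpha+1+\frac d2}\bigl(\Phi^W_\varphi(f)(a,\cdot)*\psi_a\bigr)$. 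As $\mathcal{F}_W(\Phi^W_\varphi(f)(a,\cdot))\cdot\mathcal{F}_W(\psi_a)$ is a product of two $L^2_\alpha$ functions, hence lies in $L^1_\alpha$, Proposition \ref{propconvol} writes $J_a$ as the inverse Weinstein transform of an $L^1_\alpha$ function, so by (\ref{inv}),
\[
J_a(y)=a^{2\alpha+d+2}\int_{\mathbb{R}^{d+1}_+}\mathcal{F}_W(f)(\xi)\,\overline{\mathcal{F}_W(\varphi)(a\xi)}\,\mathcal{F}_W(\psi)(a\xi)\,\Lambda_\alpha^d(-y,\xi)\,d\mu_\alpha(\xi).
\]

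Now I substitute this into the outer integral, note that $a^{2\alpha+d+2}/a^{2\alpha+d+3}=1/a$, and interchange the $a$- and $\xi$-integrations. This is justified — and it shows simultaneously that the outer integral $\int_0^\infty|J_a(y)|\,\tfrac{da}{a^{2\alpha+d+3}}$ is absolutely convergent — because, by Tonelli's theorem, $|\Lambda_\alpha^d|\le1$ and Cauchy–Schwarz in $a$,
\[
\int_0^\infty\!\!\int_{\mathbb{R}^{d+1}_+}|\mathcal{F}_W(f)(\xi)|\,|\mathcal{F}_W(\varphi)(a\xi)|\,|\mathcal{F}_W(\psi)(a\xi)|\,d\mu_\alpha(\xi)\,\frac{da}{a}\le\sqrt{C_\varphi\,C_\psi}\;\|\mathcal{F}_W(f)\|_{\alpha,1}<\infty,
\]
where $C_\varphi,C_\psi<\infty$ are the one-wavelet constants of $\varphi$ and $\psi$ and $\mathcal{F}_W(f)\in L^1_\alpha$ by hypothesis. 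After the interchange the $a$-integral equals, for a.e.\ $\xi$,
\[
\int_0^\infty\overline{\mathcal{F}_W(\varphi)(a\xi)}\,\mathcal{F}_W(\psi)(a\xi)\,\frac{da}{a}=C_{\varphi,\psi}
\]
by the definition (\ref{deftwowave}) of the two-wavelet constant, and therefore
\[
\frac1{C_{\varphi,\psi}}\int_0^\infty J_a(y)\,\frac{da}{a^{2\alpha+d+3}}=\int_{\mathbb{R}^{d+1}_+}\mathcal{F}_W(f)(\xi)\,\Lambda_\alpha^d(-y,\xi)\,d\mu_\alpha(\xi)\qquad\text{for a.e. }y.
\]
When $f\in L^1_\alpha$ the right-hand side is $f(y)$ a.e.\ by the inversion formula (\ref{inv}); when $f\in L^2_\alpha$ one has $\mathcal{F}_W(f)\in L^1_\alpha\cap L^\infty_\alpha\subset L^2_\alpha$, and since the $L^1$- and $L^2$-inversions of $\mathcal{F}_W$ coincide (Plancherel), the right-hand side is again $f(y)$ a.e. As $C_{\varphi,\psi}\neq0$, this is precisely (\ref{inversion2wave}).

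The routine parts are the dilation/translation $L^p$-estimates for $\varphi_a,\psi_a,\varphi_{a,x}$ recorded in Section 2 and the Parseval/Plancherel bookkeeping. The only genuine difficulty is the justification of the two exchanges of integration: the inner one rests on $\Phi^W_\varphi(f)(a,\cdot)\in L^2_\alpha$, i.e.\ on $\mathcal{F}_W(f)\in L^\infty_\alpha$, while the outer one is where the hypothesis $\mathcal{F}_W(f)\in L^1_\alpha$ (together with the finiteness of $C_\varphi$ and $C_\psi$) is indispensable — consistently with the fact, already noted in the statement, that the double integral need not be absolutely convergent, so Fubini cannot be invoked on the whole expression in one step.
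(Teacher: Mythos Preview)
The paper gives no self-contained proof of this theorem: it merely writes ``Using similar ideas as in the proof in \cite[Theorem 6.III.3]{trimeche2019generalized}, we obtain the relation (\ref{inversion2wave}).'' Your argument is therefore not really comparable to the paper's, but it is the standard route (pass to the Weinstein--Fourier side, compute the inner integral as a convolution, recognise $C_{\varphi,\psi}$ after a Fubini step, and invert), and it is essentially what Trim\`eche does in the hypergroup setting.

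Two remarks are in order. First, your Tonelli bound $\int_0^\infty|\mathcal{F}_W(\varphi)(a\xi)|\,|\mathcal{F}_W(\psi)(a\xi)|\frac{da}{a}\le\sqrt{C_\varphi C_\psi}$ tacitly assumes that $\varphi$ and $\psi$ are each Weinstein wavelets in the sense of (\ref{defwave}), i.e.\ $C_\varphi,C_\psi<\infty$. The theorem as stated only assumes that $(\varphi,\psi)$ is a two-wavelet, which does not obviously force the individual admissibility constants to be finite. This is not a flaw in your reasoning so much as an imprecision in the theorem's hypotheses; note that the paper itself imposes exactly this extra condition (assumption $(A_1)$) for the closely related Calder\'on formula in Section~4. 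Second, the ``elementary identity'' $\int g(x)\,\tau^\alpha_x h(y)\,d\mu_\alpha(x)=(g*h)(y)$ is off by a reflection: comparing with (\ref{defconvolution}) and the computation in the proof of Theorem~\ref{thm2} one gets $(\check g*h)(y)$ rather than $(g*h)(y)$. This check is harmless---it cancels against the check already present in $\Phi^W_\varphi(f)(a,\cdot)=a^{\alpha+1+d/2}\check f*\overline{\varphi_a}$---and your final expression for $J_a(y)$ is correct, but the intermediate line should be tidied.
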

\begin{proof}
  Using similar ideas as in the proof in \cite[Theorem 6.III.3]{trimeche2019generalized}, we obtain the  relation (\ref{inversion2wave}).
\end{proof}
\section{Calder\'on's reproducing formulas}
The main result of this section is to establish a Calder\'on's type formulas for the Weinstein two-wavelet transform under the following assumptions:
\begin{itemize}
  \item ($A_1$) Let $\varphi$ and $\psi$ be two Weinstein wavelets in $L^2_{\alpha}(\mathbb{R}^{d+1}_+)$ such that  $(\varphi,\psi)$ be a Weinstein two-wavelet, and $\mathcal{F}_{W}(\varphi)$ and $\mathcal{F}_{W}(\psi)$ are in $L^\infty_{\alpha}(\mathbb{R}^{d+1}_+)$.
  \item ($A_2$) The Weinstein two-wavelet constant  $C_{\varphi,\varphi}\neq 0$.
\end{itemize}

\begin{thm}\label{mainthm} (Calder\'on's type formulas) Let $\varphi$ and $\psi$ be two-Weinstein wavelets  satisfying the assumptions ($A_1$) and ($A_2$) and $0<\gamma<\delta<\infty$. Then for all $f$ in $L^2_{\alpha}(\mathbb{R}^{d+1}_+)$, the function
  \begin{equation}\label{fgamadelta}
   f_{\gamma,\delta}(y)=\frac{1}{C_{\varphi,\psi}}\int_{\gamma}^{\delta}\int_{\mathbb{R}^{d+1}_+}
   \Phi^W_\varphi(f)(a,x)\psi_{a,x}(y)d\mu_\alpha(x)\frac{da}{a^{2\alpha+d+3}}, \quad y\in \mathbb{R}^{d+1}_+
 \end{equation}
  belongs to $L^2_{\alpha}(\mathbb{R}^{d+1}_+)$ and satisfies
 \begin{equation}\label{limf}
   \lim_{(\gamma,\delta)\to (0,\infty)}\|f_{\gamma,\delta}-f\|_{\alpha,2}=0.
 \end{equation}
\end{thm}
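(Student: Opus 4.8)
The strategy is to work entirely on the Fourier side via Plancherel, reducing the convergence statement to a dominated-convergence argument. First I would record the Fourier transform of $f_{\gamma,\delta}$. Using the representation $\Phi^W_\varphi(f)(a,x) = a^{\alpha+1+d/2}\,\check f * \overline{\varphi_a}(x)$ from (\ref{recontwave}) and the definition $\psi_{a,x}(y) = a^{\alpha+1+d/2}\tau^\alpha_x\psi_a(y)$ from (\ref{deffiax}), the inner integral $\int_{\mathbb{R}^{d+1}_+}\Phi^W_\varphi(f)(a,x)\psi_{a,x}(y)\,d\mu_\alpha(x)$ is itself a Weinstein convolution in the $x$-variable; applying $\mathcal{F}_W$ and using Proposition \ref{propconvol}(i) together with (\ref{fourierfia}) and the translation identity (\ref{MMM}), I expect to obtain
\begin{equation*}
  \mathcal{F}_W(f_{\gamma,\delta})(\xi) = \frac{1}{C_{\varphi,\psi}}\left(\int_{\gamma}^{\delta}\overline{\mathcal{F}_W(\varphi)(a\xi)}\,\mathcal{F}_W(\psi)(a\xi)\,\frac{da}{a}\right)\mathcal{F}_W(f)(\xi) =: \frac{K_{\gamma,\delta}(\xi)}{C_{\varphi,\psi}}\,\mathcal{F}_W(f)(\xi),
\end{equation*}
possibly up to the reflections $\xi\mapsto-\xi$ handled by (\ref{fourierbar}), (\ref{fourierbar1}). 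The point is that the scalar multiplier $K_{\gamma,\delta}(\xi)$ is a truncation of the defining integral (\ref{deftwowave}) of the two-wavelet constant.

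Next I would check that $K_{\gamma,\delta}$ is uniformly bounded: since $\mathcal{F}_W(\varphi),\mathcal{F}_W(\psi)\in L^\infty_\alpha(\mathbb{R}^{d+1}_+)$ by ($A_1$) and also lie in $L^2_\alpha$ by Plancherel, the integrand $\overline{\mathcal{F}_W(\varphi)(a\xi)}\mathcal{F}_W(\psi)(a\xi)$ is, for a.e.\ fixed $\xi$, an $L^1\bigl((0,\infty),\tfrac{da}{a}\bigr)$ function whose total integral is $C_{\varphi,\psi}$; hence $|K_{\gamma,\delta}(\xi)|$ is bounded by $\|\mathcal{F}_W(\varphi)\|_{\alpha,\infty}\|\mathcal{F}_W(\psi)\|_{\alpha,\infty}$ times a constant, uniformly in $\gamma,\delta$ (the substitution $t=a\|\xi\|$ or a radial change of variable removes the $\xi$-dependence of the $L^1$ norm), and $K_{\gamma,\delta}(\xi)\to C_{\varphi,\psi}$ as $(\gamma,\delta)\to(0,\infty)$ for a.e.\ $\xi$ by the definition (\ref{deftwowave}) of the two-wavelet together with dominated convergence in the $a$-variable. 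This boundedness already shows $f_{\gamma,\delta}\in L^2_\alpha(\mathbb{R}^{d+1}_+)$ via Plancherel.

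Finally, to get (\ref{limf}), I would write, using Plancherel (\ref{Plancherel formula}),
\begin{equation*}
  \|f_{\gamma,\delta}-f\|_{\alpha,2}^2 = \int_{\mathbb{R}^{d+1}_+}\left|\frac{K_{\gamma,\delta}(\xi)}{C_{\varphi,\psi}}-1\right|^2|\mathcal{F}_W(f)(\xi)|^2\,d\mu_\alpha(\xi),
\end{equation*}
and apply the dominated convergence theorem: the integrand tends to $0$ a.e.\ by the pointwise limit $K_{\gamma,\delta}\to C_{\varphi,\psi}$, and it is dominated by $\bigl(\tfrac{\|\mathcal{F}_W(\varphi)\|_{\alpha,\infty}\|\mathcal{F}_W(\psi)\|_{\alpha,\infty}}{|C_{\varphi,\psi}|}+1\bigr)^2|\mathcal{F}_W(f)|^2\in L^1_\alpha$ by the uniform bound and the fact that $f\in L^2_\alpha$. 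The main obstacle I anticipate is the first step: justifying the interchange of integrals (in $a$, in $x$, and inside $\mathcal{F}_W$) needed to identify $\mathcal{F}_W(f_{\gamma,\delta})$ cleanly — because the double integral need not be absolutely convergent in general, one should either first argue for $f$ with $\mathcal{F}_W(f)$ compactly supported away from the origin and bounded (a dense subclass of $L^2_\alpha$, on which Fubini applies thanks to the $L^\infty$ hypothesis on the wavelets and the finite range $[\gamma,\delta]$) and then pass to the limit using the uniform operator bound $\|f_{\gamma,\delta}\|_{\alpha,2}\le C\|f\|_{\alpha,2}$, or invoke a Fubini argument over the compact $a$-interval $[\gamma,\delta]$ directly. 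The reflection bookkeeping with $\check{\cdot}$ and $\widetilde{\cdot}$ is routine but must be done carefully to land on $\overline{\mathcal{F}_W(\varphi)(a\xi)}\mathcal{F}_W(\psi)(a\xi)$ rather than a reflected variant.
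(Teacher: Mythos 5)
Your plan is essentially the paper's proof: the paper likewise reduces everything to the Fourier side by showing $\mathcal{F}_{W}(f_{\gamma,\delta})(\xi)=\mathcal{F}_{W}(f)(\xi)K_{\gamma,\delta}(\xi)$ with $K_{\gamma,\delta}(\xi)=\frac{1}{C_{\varphi,\psi}}\int_{\gamma}^{\delta}\overline{\mathcal{F}_{W}(\varphi)(a\xi)}\,\mathcal{F}_{W}(\psi)(a\xi)\frac{da}{a}$ (your normalization differs only by where the factor $1/C_{\varphi,\psi}$ sits), and then concludes by dominated convergence; the interchange-of-integrals issue you flag is handled exactly as in your second suggested fix, by staying on the compact interval $[\gamma,\delta]$ and identifying $\mathcal{F}_{W}(f_{\gamma,\delta})$ weakly against test functions $h\in\mathcal{S}_*(\mathbb{R}^{d+1})$, with the absolute convergence needed for Fubini supplied by the $L^{\infty}$ hypothesis on $\mathcal{F}_{W}(\varphi)$ and $\mathcal{F}_{W}(\psi)$.

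The one step whose justification would fail as written is your uniform bound on $K_{\gamma,\delta}$. Bounding $|K_{\gamma,\delta}(\xi)|$ by $\|\mathcal{F}_{W}(\varphi)\|_{\alpha,\infty}\|\mathcal{F}_{W}(\psi)\|_{\alpha,\infty}$ ``times a constant'' cannot work directly, because $\int_{\gamma}^{\delta}\frac{da}{a}=\log(\delta/\gamma)$ is unbounded as $(\gamma,\delta)\to(0,\infty)$; and the substitution $t=a\|\xi\|$ does not remove the $\xi$-dependence unless $\mathcal{F}_{W}(\varphi)$ and $\mathcal{F}_{W}(\psi)$ are radial, since it only rescales along the ray through $\xi$. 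The correct bound, uniform in both $\xi$ and $(\gamma,\delta)$, comes from Cauchy--Schwarz against the individual admissibility integrals: since $\varphi$ and $\psi$ are each Weinstein wavelets, $\int_{0}^{\infty}|\mathcal{F}_{W}(\varphi)(a\xi)|^{2}\frac{da}{a}=C_{\varphi}$ for a.e.\ $\xi$ (and likewise for $\psi$), whence $\bigl|\int_{\gamma}^{\delta}\overline{\mathcal{F}_{W}(\varphi)(a\xi)}\mathcal{F}_{W}(\psi)(a\xi)\frac{da}{a}\bigr|\leq\sqrt{C_{\varphi}C_{\psi}}$. This is exactly what the paper does, and it also substantiates your claim that the integrand lies in $L^{1}\bigl((0,\infty),\frac{da}{a}\bigr)$, which is what legitimizes the pointwise limit $K_{\gamma,\delta}(\xi)\to 1$ in the paper's normalization. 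With that repair, the rest of your argument --- membership of $f_{\gamma,\delta}$ in $L^{2}_{\alpha}(\mathbb{R}^{d+1}_{+})$ via Plancherel and the multiplier bound, then dominated convergence for (\ref{limf}) --- goes through as in the paper.
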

 In the order to prove this theorem we need the following Lemmas.
\begin{lem}\label{lem1}
  Let $\varphi$ and $\psi$ be two-Weinstein wavelets  satisfying the assumptions ($A_1$) and ($A_2$) and $f$ in $L^2_{\alpha}(\mathbb{R}^{d+1}_+)$. Then we have the following assertions:\\
  (i) The functions $(\check{f}*\overline{\varphi_a}\check{)}$ and $(\check{f}*\overline{\varphi_a}\check{)}*\psi_a$ belongs to $L^2_{\alpha}(\mathbb{R}^{d+1}_+)$ and we have
        \begin{equation}\label{fourier**}
          \mathcal{F}_{W}((\check{f}*\overline{\varphi_a}\check{)}*\psi_a)(\xi)= \mathcal{F}_{W}(f)(\xi) \overline{\mathcal{F}_{W}(\varphi_a)(\xi)}\mathcal{F}_{W}(\psi_a)(\xi),\quad \xi\in\mathbb{R}^{d+1}_+.
         \end{equation}
    (ii) We have the following inequality
     \begin{equation}\label{norm2fourier**}
          \|\mathcal{F}_{W}(\check{f}*\overline{\varphi_a}\check{)}*\psi_a\|_{\alpha,2}\leq \|f\|_{\alpha,2} \|\mathcal{F}_{W}(\varphi)\|_{\alpha,\infty} \|\mathcal{F}_{W}(\psi)\|_{\alpha,\infty}.
         \end{equation}

\end{lem}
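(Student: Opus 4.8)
The plan is to work entirely on the Weinstein transform side, using Proposition \ref{propconvol} to convert convolutions into products and the hypotheses in ($A_1$) to bound the wavelet factors in $L^\infty_\alpha$. First I would record that, since $f\in L^2_\alpha(\mathbb{R}^{d+1}_+)$, the reflected function $\check f$ also lies in $L^2_\alpha(\mathbb{R}^{d+1}_+)$ with $\mathcal{F}_{W}(\check f)(\xi)=\mathcal{F}_{W}(f)(-\xi)$, and that $\varphi_a,\psi_a\in L^2_\alpha(\mathbb{R}^{d+1}_+)$ by \eqref{normLpfia}. For the first convolution, part (v) of Proposition \ref{propconvol} gives $\|\check f*\overline{\varphi_a}\|_{\alpha,2}=\|\mathcal{F}_{W}(\check f)\,\mathcal{F}_{W}(\overline{\varphi_a})\|_{\alpha,2}$; using \eqref{fourierfia}, \eqref{fourierbar} and $|\mathcal{F}_{W}(\check f)(\xi)|=|\mathcal{F}_{W}(f)(\xi)|$ pointwise, together with $\|\mathcal{F}_{W}(\varphi_a)\|_{\alpha,\infty}=\|\mathcal{F}_{W}(\varphi)(a\,\cdot)\|_{\alpha,\infty}=\|\mathcal{F}_{W}(\varphi)\|_{\alpha,\infty}$ (a change of variables leaves the essential supremum unchanged), this is bounded by $\|f\|_{\alpha,2}\|\mathcal{F}_{W}(\varphi)\|_{\alpha,\infty}<\infty$, so $(\check f*\overline{\varphi_a}\check{)}\in L^2_\alpha(\mathbb{R}^{d+1}_+)$. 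Iterating the same argument with $\psi_a$ in place of $\overline{\varphi_a}$ shows $(\check f*\overline{\varphi_a}\check{)}*\psi_a\in L^2_\alpha(\mathbb{R}^{d+1}_+)$.

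For the Fourier identity \eqref{fourier**}, I would apply part (iv) (or (iii)) of Proposition \ref{propconvol} twice: $\mathcal{F}_{W}\big((\check f*\overline{\varphi_a}\check{)}*\psi_a\big)=\mathcal{F}_{W}\big((\check f*\overline{\varphi_a}\check{)}\big)\,\mathcal{F}_{W}(\psi_a)$, and $\mathcal{F}_{W}\big((\check f*\overline{\varphi_a}\check{)}\big)(\xi)=\mathcal{F}_{W}(\check f*\overline{\varphi_a})(-\xi)=\mathcal{F}_{W}(\check f)(-\xi)\,\mathcal{F}_{W}(\overline{\varphi_a})(-\xi)$. Then \eqref{fourierbar} gives $\mathcal{F}_{W}(\overline{\varphi_a})(-\xi)=\overline{\mathcal{F}_{W}(\widetilde{\varphi_a})(-\xi)}=\overline{\mathcal{F}_{W}(\varphi_a)(\xi)}$ by \eqref{fourierbar1}, and $\mathcal{F}_{W}(\check f)(-\xi)=\mathcal{F}_{W}(f)(\xi)$; combining with \eqref{fourierfia} yields exactly $\mathcal{F}_{W}(f)(\xi)\,\overline{\mathcal{F}_{W}(\varphi_a)(\xi)}\,\mathcal{F}_{W}(\psi_a)(\xi)$, which is \eqref{fourier**}.

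For the estimate \eqref{norm2fourier**}, by Plancherel \eqref{Plancherel formula} the left side equals $\|(\check f*\overline{\varphi_a}\check{)}*\psi_a\|_{\alpha,2}$, which by part (v) of Proposition \ref{propconvol} equals $\|\mathcal{F}_{W}\big((\check f*\overline{\varphi_a}\check{)}*\psi_a\big)\|_{\alpha,2}$; but the statement is really the pointwise bound $|\mathcal{F}_{W}(f)(\xi)|\,|\overline{\mathcal{F}_{W}(\varphi_a)(\xi)}|\,|\mathcal{F}_{W}(\psi_a)(\xi)|\le |\mathcal{F}_{W}(f)(\xi)|\,\|\mathcal{F}_{W}(\varphi)\|_{\alpha,\infty}\,\|\mathcal{F}_{W}(\psi)\|_{\alpha,\infty}$ followed by integration and Plancherel on $f$. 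I do not anticipate a genuine obstacle here; the only point requiring a little care is the scaling invariance of the $L^\infty_\alpha$ norm under $\xi\mapsto a\xi$ — one should check that the dilation is a measure-space isomorphism up to a positive Jacobian so that essential suprema (not $L^p$ norms) are preserved — and the bookkeeping of reflections $\check{\,\cdot\,}$ versus $\widetilde{\,\cdot\,}$ through \eqref{fourierbar}–\eqref{fourierbar1}, which must be tracked consistently but involves no real difficulty.
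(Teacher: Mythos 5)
Your proposal is correct and follows essentially the same route as the paper's proof: the convolution theorem of Proposition \ref{propconvol} applied twice on the Weinstein-transform side, with the reflections and conjugations tracked through (\ref{fourierbar})--(\ref{fourierbar1}) to get (\ref{fourier**}), and then the pointwise $L^\infty_{\alpha}$ bound on $\mathcal{F}_{W}(\varphi_a)$ and $\mathcal{F}_{W}(\psi_a)$ combined with Plancherel to get (\ref{norm2fourier**}). The one point you add explicitly --- that the dilation $\xi\mapsto a\xi$ preserves essential suprema --- is sound and is exactly what the paper's appeal to (\ref{fourierfia}) implicitly uses.
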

\begin{proof}
  (i) According to the relations (\ref{fourierbar}), (\ref{fourierbar1}) and Proposition \ref{propconvol} ($v$) we obtain
  \begin{eqnarray*}
    \mathcal{F}_{W}((\check{f}*\overline{\varphi_a}\check{)})(\xi) &=&  \mathcal{F}_{W}(\check{f}*\overline{\varphi_a})(-\xi) \\
     &=& \mathcal{F}_{W}(\check{f})(-\xi)\mathcal{F}_{W}(\overline{\varphi_a})(-\xi)\\
     &=& \mathcal{F}_{W}(f)(\xi)\overline{\mathcal{F}_{W}(\check{\varphi}_a)(-\xi)}.
  \end{eqnarray*}
  Hence, we have
  \begin{equation}\label{f*ubar}
     \mathcal{F}_{W}((\check{f}*\overline{\varphi_a}\check{)})(\xi) = \mathcal{F}_{W}(f)(\xi)\overline{\mathcal{F}_{W}(\varphi_a)(\xi)}.
  \end{equation}
  After that, we consider the  function $G$ defined on $\mathbb{R}^{d+1}_+$ by
  \begin{equation*}
    G(\xi)= \mathcal{F}_{W}((\check{f}*\overline{\varphi_a}\check{)})(\xi).
  \end{equation*}
  Therefore
   \begin{equation*}
     \mathcal{F}_{W}((\check{f}*\overline{\varphi_a}\check{)}*\psi_a)(\xi)=\mathcal{F}_{W}(G*\psi_a)(\xi),\quad \xi\in\mathbb{R}^{d+1}_+.
   \end{equation*}
   According to Proposition \ref{propconvol} ($v$), we deduce that the function $G$ belongs to $L^2_{\alpha}(\mathbb{R}^{d+1}_+)$ and we have
   \begin{equation}\label{G*va}
      \mathcal{F}_{W}(G*\psi_a)(\xi)= \mathcal{F}_{W}(G)(\xi) \mathcal{F}_{W}(\psi_a)(\xi),\quad \xi\in\mathbb{R}^{d+1}_+.
   \end{equation}
   Finally, we obtain the result by combining the relations  (\ref{f*ubar}) and (\ref{G*va}).\\
  (ii) From the assertion (i), we have
  \begin{equation*}\label{fourier**}
          \int_{\mathbb{R}^{d+1}_+}|\mathcal{F}_{W}((\check{f}*\overline{\varphi_a}\check{)}*\psi_a)(\xi)|^2d\mu_\alpha(x)= \int_{\mathbb{R}^{d+1}_+}|\mathcal{F}_{W}(f)(\xi)|^2 |\mathcal{F}_{W}(\varphi_a)(\xi)|^2 |\mathcal{F}_{W}(\psi_a)(\xi)|^2d\mu_\alpha(x).
         \end{equation*}
  Therefore, according to Plancherel formula for Weinstein transform (\ref{Plancherel formula})  and the fact that  $\mathcal{F}_{W}(\varphi_a)$ and $\mathcal{F}_{W}(\psi_a)$ belongs to $L^\infty_{\alpha}(\mathbb{R}^{d+1}_+)$, we deduce that
  \begin{equation*}
     \|\mathcal{F}_{W}(\check{f}*\overline{\varphi_a}\check{)}*\psi_a\|_{\alpha,2}\leq \|f\|_{\alpha,2} \|\mathcal{F}_{W}(\varphi_a)\|_{\alpha,\infty} \|\mathcal{F}_{W}(\psi_a)\|_{\alpha,\infty}.
  \end{equation*}
  In the end, we conclude the result from the relation (\ref{fourierfia}).
\end{proof}

\begin{lem}
  Let $\varphi$ and $\psi$ be two-Weinstein wavelets  satisfying the assumptions ($A_1$) and ($A_2$) . Then the function  $ K_{\gamma,\delta}$ defined as follows
  \begin{equation}\label{Kgamadelta}
   K_{\gamma,\delta}(\xi)=\frac{1}{C_{\varphi,\psi}}\int_{\gamma}^{\delta}\overline{\mathcal{F}_{W}(\varphi_a)(\xi)}\mathcal{F}_{W}(\psi_a)
   (\xi)\frac{da}{a}, \quad \xi\in \mathbb{R}^{d+1}_+
 \end{equation}
 satisfies, for almost all $\xi\in \mathbb{R}^{d+1}_+$:
 \begin{equation}\label{ineqKgamadelta}
   0< K_{\gamma,\delta}(\xi)\leq \frac{\sqrt{C_{\varphi}C_{\psi}}}{C_{\varphi,\psi}},
 \end{equation}
 and
 \begin{equation}\label{limKgamadelta}
    \lim_{(\gamma,\delta)\to (0,\infty)}K_{\gamma,\delta}(\xi)=1.
 \end{equation}
\end{lem}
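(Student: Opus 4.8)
The plan is to reduce both assertions to elementary one‑variable facts about the truncated integral defining $K_{\gamma,\delta}$, after first removing the dilation parameter from inside the Fourier transforms. I would begin by invoking the identity (\ref{fourierfia}), which gives $\mathcal{F}_{W}(\varphi_a)(\xi)=\mathcal{F}_{W}(\varphi)(a\xi)$ and $\mathcal{F}_{W}(\psi_a)(\xi)=\mathcal{F}_{W}(\psi)(a\xi)$, to rewrite
\begin{equation*}
  K_{\gamma,\delta}(\xi)=\frac{1}{C_{\varphi,\psi}}\int_{\gamma}^{\delta}\overline{\mathcal{F}_{W}(\varphi)(a\xi)}\,\mathcal{F}_{W}(\psi)(a\xi)\,\frac{da}{a},\qquad \xi\in\mathbb{R}^{d+1}_+.
\end{equation*}
By the two‑wavelet hypothesis contained in $(A_1)$, for almost every $\xi$ the map $a\mapsto\overline{\mathcal{F}_{W}(\varphi)(a\xi)}\,\mathcal{F}_{W}(\psi)(a\xi)$ is integrable against $\tfrac{da}{a}$ on $(0,\infty)$ with integral the positive constant $C_{\varphi,\psi}$; this, together with (\ref{fourierfia}), is all that will be used.

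For the upper bound in (\ref{ineqKgamadelta}) I would apply the Cauchy--Schwarz inequality in $L^{2}\bigl((\gamma,\delta),\tfrac{da}{a}\bigr)$:
\begin{equation*}
  \left|\int_{\gamma}^{\delta}\overline{\mathcal{F}_{W}(\varphi)(a\xi)}\,\mathcal{F}_{W}(\psi)(a\xi)\,\frac{da}{a}\right|\le\left(\int_{\gamma}^{\delta}\left|\mathcal{F}_{W}(\varphi)(a\xi)\right|^{2}\frac{da}{a}\right)^{1/2}\left(\int_{\gamma}^{\delta}\left|\mathcal{F}_{W}(\psi)(a\xi)\right|^{2}\frac{da}{a}\right)^{1/2}.
\end{equation*}
Since both integrands on the right are nonnegative, enlarging the range of integration to $(0,\infty)$ only increases them, and by (\ref{deftwowaveu=v}) the two resulting integrals equal $C_\varphi$ and $C_\psi$, finite for almost every $\xi$ by the wavelet condition (\ref{defwave}). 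Dividing through by $C_{\varphi,\psi}$ gives $K_{\gamma,\delta}(\xi)\le\frac{\sqrt{C_\varphi C_\psi}}{C_{\varphi,\psi}}$. The positivity $K_{\gamma,\delta}(\xi)>0$ should follow from the nonnegativity of $\overline{\mathcal{F}_{W}(\varphi)(a\xi)}\,\mathcal{F}_{W}(\psi)(a\xi)$ for a.e. $(a,\xi)$ --- which is exactly what makes the defining integral (\ref{deftwowave}) equal to the positive constant $C_{\varphi,\psi}$ --- provided this product does not vanish $\tfrac{da}{a}$‑almost everywhere on the window $(\gamma,\delta)$. Pinning this last point down from the stated hypotheses is the one place where I expect the argument to need genuine care; everything else here is routine.

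For (\ref{limKgamadelta}) I would observe that, for almost every $\xi$, the integrand is dominated by $\tfrac12\bigl(\left|\mathcal{F}_{W}(\varphi)(a\xi)\right|^{2}+\left|\mathcal{F}_{W}(\psi)(a\xi)\right|^{2}\bigr)$, a function lying in $L^{1}\bigl((0,\infty),\tfrac{da}{a}\bigr)$ with integral at most $\tfrac12(C_\varphi+C_\psi)<\infty$ and independent of $\gamma$ and $\delta$. Letting $\gamma\downarrow 0$ and $\delta\uparrow\infty$, the dominated convergence theorem then gives, for almost every $\xi$,
\begin{equation*}
  C_{\varphi,\psi}\,K_{\gamma,\delta}(\xi)=\int_{\gamma}^{\delta}\overline{\mathcal{F}_{W}(\varphi)(a\xi)}\,\mathcal{F}_{W}(\psi)(a\xi)\,\frac{da}{a}\ \longrightarrow\ \int_{0}^{\infty}\overline{\mathcal{F}_{W}(\varphi)(a\xi)}\,\mathcal{F}_{W}(\psi)(a\xi)\,\frac{da}{a}=C_{\varphi,\psi},
\end{equation*}
and dividing by $C_{\varphi,\psi}\neq 0$ yields $\lim_{(\gamma,\delta)\to(0,\infty)}K_{\gamma,\delta}(\xi)=1$.

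In summary, once the rewriting of the first paragraph and the $L^{1}(\tfrac{da}{a})$‑domination are in hand, the limit (\ref{limKgamadelta}) and the upper bound in (\ref{ineqKgamadelta}) are immediate consequences of Cauchy--Schwarz, the dilation identity (\ref{fourierfia}), and the definition of a Weinstein two‑wavelet; the whole difficulty of the lemma is concentrated in justifying the strict positivity $K_{\gamma,\delta}(\xi)>0$ for a.e. $\xi$.
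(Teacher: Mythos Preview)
Your argument is essentially identical to the paper's: Cauchy--Schwarz followed by enlarging the interval of integration to $(0,\infty)$ gives the upper bound, and the limit (which the paper simply declares ``clear'') you justify more carefully via dominated convergence with the integrable majorant $\tfrac12\bigl(|\mathcal{F}_W(\varphi)(a\xi)|^2+|\mathcal{F}_W(\psi)(a\xi)|^2\bigr)$.

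You are right to single out the strict positivity $K_{\gamma,\delta}(\xi)>0$ as the delicate point --- the paper's own proof does not establish it either, proving only the absolute-value estimate $|K_{\gamma,\delta}(\xi)|\le\sqrt{C_\varphi C_\psi}/|C_{\varphi,\psi}|$. In fact the stated strict lower bound does not follow from hypotheses $(A_1)$--$(A_2)$ alone: nothing prevents $\overline{\mathcal{F}_W(\varphi)(a\xi)}\,\mathcal{F}_W(\psi)(a\xi)$ from being complex-valued or changing sign on a given finite window $(\gamma,\delta)$, so $K_{\gamma,\delta}(\xi)$ need not even be real, let alone positive, for all $0<\gamma<\delta<\infty$. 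The lemma as stated is therefore slightly imprecise; what is actually proved, and all that is used downstream in Theorem~\ref{mainthm}, is the uniform bound $|K_{\gamma,\delta}|\le\sqrt{C_\varphi C_\psi}/|C_{\varphi,\psi}|$ together with the pointwise limit~(\ref{limKgamadelta}). You should not expend further effort trying to recover the strict positivity from the stated hypotheses.
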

\begin{proof}
  According to the Cauchy-Schwarz inequality and the identity (\ref{deftwowaveu=v}), we have for almost all $\xi\in \mathbb{R}^{d+1}_+$
  \begin{eqnarray*}
    |K_{\gamma,\delta}(\xi)| &\leq& \frac{1}{|C_{\varphi,\psi}|}\left(\int_{\gamma}^{\delta}|\mathcal{F}_{W}(\varphi_a)(\xi)|^2\frac{da}{a}\right)^\frac{1}{2} \left(\int_{\gamma}^{\delta}|\mathcal{F}_{W}(\psi_a)(\xi)|^2\frac{da}{a}\right)^\frac{1}{2}\\
     &\leq& \frac{1}{|C_{\varphi,\psi}|}\left(\int_{0}^{\infty}|\mathcal{F}_{W}(\varphi_a)(\xi)|^2\frac{da}{a}\right)^\frac{1}{2} \left(\int_{0}^{\infty}|\mathcal{F}_{W}(\psi_a)(\xi)|^2\frac{da}{a}\right)^\frac{1}{2}\\
     &=& \frac{\sqrt{C_{\varphi}C_{\psi}}}{C_{\varphi,\psi}}.
  \end{eqnarray*}
  On the other hand, it's clear that for almost all $\xi\in \mathbb{R}^{d+1}_+$
  $$\lim_{(\gamma,\delta)\to (0,\infty)}K_{\gamma,\delta}(\xi)=1.$$
  This completes the proof.
\end{proof}
\begin{thm}\label{thm2}
   Let $\varphi$ and $\psi$ be two-Weinstein wavelets  satisfying the assumptions ($A_1$) and ($A_2$) . Then the function  $f_{\gamma,\delta}$ defined by the relation (\ref{fgamadelta}) belongs to $L^2_{\alpha}(\mathbb{R}^{d+1}_+)$ and
satisfies
\begin{equation}\label{Fourfgamadelta}
  \mathcal{F}_{W}(f_{\gamma,\delta})(\xi)=\mathcal{F}_{W}(f)(\xi)K_{\gamma,\delta}(\xi),\quad \xi\in \mathbb{R}^{d+1}_+,
\end{equation}
where $K_{\gamma,\delta}$ is the function given by the relation (\ref{Kgamadelta}).
\end{thm}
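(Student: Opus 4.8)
The plan is to collapse the double integral in (\ref{fgamadelta}) to a single integral over the dilation parameter $a$, to read that integral as an $L^2_\alpha(\mathbb{R}^{d+1}_+)$-valued integral whose integrand is controlled uniformly on $[\gamma,\delta]$ by Lemma \ref{lem1}, and then to pass the Weinstein transform inside. First I would substitute into (\ref{fgamadelta}) the expression (\ref{recontwave}), $\Phi^W_\varphi(f)(a,x)=a^{\alpha+1+\frac{d}{2}}\check{f}*\overline{\varphi_a}(x)$, together with (\ref{deffiax}), $\psi_{a,x}(y)=a^{\alpha+1+\frac{d}{2}}\tau^\alpha_x\psi_a(y)$. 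The inner integral over $x$ then equals
\[
 a^{2\alpha+d+2}\int_{\mathbb{R}^{d+1}_+}\check{f}*\overline{\varphi_a}(x)\,\tau^\alpha_x\psi_a(y)\,d\mu_\alpha(x),
\]
and a direct computation using the symmetry (\ref{symtrictrans}) of the Weinstein translation and the definition (\ref{defconvolution}) of the generalized convolution identifies the integral here with $(\check{f}*\overline{\varphi_a}\check{)}*\psi_a(y)$. Since $a^{2\alpha+d+2}/a^{2\alpha+d+3}=1/a$, this yields
\[
 f_{\gamma,\delta}=\frac{1}{C_{\varphi,\psi}}\int_{\gamma}^{\delta}(\check{f}*\overline{\varphi_a}\check{)}*\psi_a\,\frac{da}{a}.
\]

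Next I would interpret this as a Bochner integral in $L^2_\alpha(\mathbb{R}^{d+1}_+)$. By Lemma \ref{lem1}(i), for each $a>0$ the integrand $(\check{f}*\overline{\varphi_a}\check{)}*\psi_a$ lies in $L^2_\alpha(\mathbb{R}^{d+1}_+)$; by Lemma \ref{lem1}(ii) its norm is bounded, uniformly for $a\in[\gamma,\delta]$, by $\|f\|_{\alpha,2}\|\mathcal{F}_{W}(\varphi)\|_{\alpha,\infty}\|\mathcal{F}_{W}(\psi)\|_{\alpha,\infty}$, which is finite by ($A_1$). As $\int_{\gamma}^{\delta}\frac{da}{a}=\log(\delta/\gamma)<\infty$, the integral converges in $L^2_\alpha(\mathbb{R}^{d+1}_+)$; hence $f_{\gamma,\delta}\in L^2_\alpha(\mathbb{R}^{d+1}_+)$ with $\|f_{\gamma,\delta}\|_{\alpha,2}\leq \frac{\log(\delta/\gamma)}{|C_{\varphi,\psi}|}\|f\|_{\alpha,2}\|\mathcal{F}_{W}(\varphi)\|_{\alpha,\infty}\|\mathcal{F}_{W}(\psi)\|_{\alpha,\infty}$.

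Since $\mathcal{F}_{W}$ is an isometry of $L^2_\alpha(\mathbb{R}^{d+1}_+)$ (Plancherel's theorem), it is bounded and therefore commutes with this Bochner integral; applying it and then using Lemma \ref{lem1}(i) for each $a$ gives, for almost every $\xi\in\mathbb{R}^{d+1}_+$,
\[
 \mathcal{F}_{W}(f_{\gamma,\delta})(\xi)=\frac{1}{C_{\varphi,\psi}}\int_{\gamma}^{\delta}\mathcal{F}_{W}(f)(\xi)\,\overline{\mathcal{F}_{W}(\varphi_a)(\xi)}\,\mathcal{F}_{W}(\psi_a)(\xi)\,\frac{da}{a}=\mathcal{F}_{W}(f)(\xi)\,K_{\gamma,\delta}(\xi),
\]
the last equality being the definition (\ref{Kgamadelta}) of $K_{\gamma,\delta}$; this is (\ref{Fourfgamadelta}).

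I expect the main obstacle to be the first step, the rewriting of $f_{\gamma,\delta}$: one must keep careful track of the reflection operations and of (\ref{symtrictrans}) when passing from $\int_{\mathbb{R}^{d+1}_+}\check{f}*\overline{\varphi_a}(x)\,\tau^\alpha_x\psi_a(y)\,d\mu_\alpha(x)$ to $(\check{f}*\overline{\varphi_a}\check{)}*\psi_a(y)$, and one must verify that, for each fixed $a$, this inner integral really defines an element of $L^2_\alpha(\mathbb{R}^{d+1}_+)$ as a function of $y$ — which is precisely what Lemma \ref{lem1}(i) guarantees, so the computation is legitimate rather than merely formal. Everything after that reduces to Plancherel's theorem and Lemma \ref{lem1}.
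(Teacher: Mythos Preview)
Your proof is correct and shares the paper's opening move: both rewrite $f_{\gamma,\delta}$ as $\frac{1}{C_{\varphi,\psi}}\int_{\gamma}^{\delta}(\check{f}*\overline{\varphi_a}\check{)}*\psi_a\,\frac{da}{a}$ using (\ref{recontwave}), (\ref{deffiax}), (\ref{symtrictrans}) and (\ref{defconvolution}). From there the two arguments diverge. The paper proves $L^2$-membership by a pointwise H\"older inequality in $a$ followed by Fubini--Tonelli and Parseval, and then establishes (\ref{Fourfgamadelta}) via a duality argument: it pairs $f_{\gamma,\delta}$ against $\overline{\mathcal{F}_{W}^{-1}(h)}$ for arbitrary $h\in\mathcal{S}_*(\mathbb{R}^{d+1})$, applies Fubini and Lemma~\ref{lem1}(i), and concludes that $\mathcal{F}_{W}(f_{\gamma,\delta})-\mathcal{F}_{W}(f)K_{\gamma,\delta}$ is orthogonal to every test function. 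You instead treat the $a$-integral as a Bochner integral in $L^2_\alpha(\mathbb{R}^{d+1}_+)$ and pull the isometry $\mathcal{F}_{W}$ through it in one stroke. Your route is shorter and avoids the testing argument; the paper's route is more elementary in that it uses only scalar Fubini and needs no vector-valued integration. One small point worth adding: Bochner integrability requires strong measurability of $a\mapsto(\check{f}*\overline{\varphi_a}\check{)}*\psi_a$, not just the norm bound from Lemma~\ref{lem1}(ii); this is routine (for instance via the Fourier-side expression in Lemma~\ref{lem1}(i) together with (\ref{fourierfia}) and dominated convergence, which gives continuity in $a$), but deserves a one-line justification.
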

\begin{proof}
  In a first step, we try to show that $f_{\gamma,\delta}$  belongs to $L^2_{\alpha}(\mathbb{R}^{d+1}_+)$. From, the definition of the  Weinstein continuous wavelet transform (\ref{contwave}) and the relations (\ref{symtrictrans}) and (\ref{deffiax}) we get
  \begin{equation*}
   f_{\gamma,\delta}(y)=\frac{1}{C_{\varphi,\psi}}\int_{\gamma}^{\delta}\int_{\mathbb{R}^{d+1}_+}
   (\check{f}*\overline{\varphi_a})(x) \tau^\alpha_y \psi_{a}(x)d\mu_\alpha(x)\frac{da}{a}.
  \end{equation*}
On the other hand, from the definition of the convolution product associated to the Weinstein transform given by the relation (\ref{defconvolution}), we have
\begin{eqnarray*}
  \int_{\mathbb{R}^{d+1}_+}
   (\check{f}*\overline{\varphi_a})(x) \tau^\alpha_y \psi_{a}(x)d\mu_\alpha(x) &=& \int_{\mathbb{R}^{d+1}_+}
   (\check{f}*\overline{\varphi_a}\check{)}(x) \tau^\alpha_y \psi_{a}(-x)d\mu_\alpha(x) \\
   &=& (\check{f}*\overline{\varphi_a}\check{)}*\psi_a(y).
\end{eqnarray*}
So, we have
\begin{equation}\label{fgamadeltaconvo}
   f_{\gamma,\delta}(y)=\frac{1}{C_{\varphi,\psi}}\int_{\gamma}^{\delta}(\check{f}*\overline{\varphi_a}\check{)}*\psi_a(y)\frac{da}{a}.
\end{equation}
By using Hölder's inequality for the measure  $\frac{da}{a}$, we obtain
\begin{equation*}
   |f_{\gamma,\delta}(y)|^2=\frac{1}{|C_{\varphi,\psi}|^2}\left(\int_{\gamma}^{\delta}\frac{da}{a}\right)\left(\int_{\gamma}^{\delta}
   \left|(\check{f}*\overline{\varphi_a}\check{)}*\psi_a(y)\right|^2\frac{da}{a}\right).
\end{equation*}
Aapplying Fubuni-Tonelli's theorem, after integrating the pervious inequality, we get
\begin{equation*}
  \int_{\mathbb{R}^{d+1}_+} |f_{\gamma,\delta}(y)|^2d\mu_\alpha(y)\leq \frac{1}{|C_{\varphi,\psi}|^2}\left(\int_{\gamma}^{\delta}\frac{da}{a}\right) \int_{\gamma}^{\delta}\left(\int_{\mathbb{R}^{d+1}_+}
   \left|(\check{f}*\overline{\varphi_a}\check{)}*\psi_a(y)\right|^2d\mu_\alpha(y)\right)\frac{da}{a}.
\end{equation*}
According to the Parseval's formula for the Weinstein transform (\ref{MM}) and the assertion $(i)$ of the Lemma \ref{lem1}, we deduce that
\begin{eqnarray*}
    \int_{\mathbb{R}^{d+1}_+} |f_{\gamma,\delta}(y)|^2d\mu_\alpha(y) &\leq &  \frac{1}{|C_{\varphi,\psi}|^2}\left(\int_{\gamma}^{\delta}\frac{da}{a}\right)\int_{\mathbb{R}^{d+1}_+} |\mathcal{F}_{W}(f)(\xi)|^2 \\
   & \times & \left(\int_{\gamma}^{\delta}|\mathcal{F}_{W}(\varphi_a)(\xi)|^2 |\mathcal{F}_{W}(\psi_a)(\xi)|^2\frac{da}{a}\right)d\mu_\alpha(\xi).
\end{eqnarray*}
On the other hand, from the identity (\ref{deftwowaveu=v}) and the  relation (\ref{fourierfia}), we have
\begin{equation*}
  \int_{\gamma}^{\delta}|\mathcal{F}_{W}(\varphi_a)(\xi)|^2 |\mathcal{F}_{W}(\psi_a)(\xi)|^2\frac{da}{a}\leq C_{\psi}\|\mathcal{F}_{W}(\varphi)\|_{\alpha, \infty}.
\end{equation*}
Therefore,
\begin{equation*}
   \int_{\mathbb{R}^{d+1}_+} |f_{\gamma,\delta}(y)|^2d\mu_\alpha(y) \leq  \frac{C_{\psi}}{|C_{\varphi,\psi}|^2} \left(\int_{\gamma}^{\delta}\frac{da}{a}\right)\|\mathcal{F}_{W}(\varphi)\|_{\alpha, \infty}\|\mathcal{F}_{W}(f)\|_{\alpha,2}.
\end{equation*}
Finally, from the Plancherel's formula for the Weinstein transform (\ref{Plancherel formula}), we get
\begin{equation*}
   \int_{\mathbb{R}^{d+1}_+} |f_{\gamma,\delta}(y)|^2d\mu_\alpha(y) \leq  \frac{C_{\psi}}{|C_{\varphi,\psi}|^2} \left(\int_{\gamma}^{\delta}\frac{da}{a}\right)\|\mathcal{F}_{W}(\varphi)\|_{\alpha, \infty}\|f\|_{\alpha,2}\leq \infty,
\end{equation*}
which implies that  $f_{\gamma,\delta}$  belongs to $L^2_{\alpha}(\mathbb{R}^{d+1}_+)$.

In the second step, We prove the relation (\ref{Fourfgamadelta}). Let $h\in\mathcal{S}_*(\mathbb{R}^{d+1})$, then $\mathcal{F}_{W}^{-1}(h)$ belongs to $mathcal{S}_*(\mathbb{R}^{d+1})$ and from the relation (\ref{fgamadeltaconvo}), we have
\begin{eqnarray}\label{fgamadeltaconvointegF}
  &&\!\!\!\!\!\!\!\!\!\!\!\!\!\!\!\! \int_{\mathbb{R}^{d+1}_+}  f_{\gamma,\delta}(y) \overline{\mathcal{F}_{W}^{-1}(h)(y)}d\mu_\alpha(y)  \nonumber\\ &=& \int_{\mathbb{R}^{d+1}_+} \left(\frac{1}{C_{\varphi,\psi}}\int_{\gamma}^{\delta}(\check{f}*\overline{\varphi_a}\check{)}*\psi_a(y)\frac{da}{a}\right) \overline{\mathcal{F}_{W}^{-1}(h)(y)}d\mu_\alpha(y).
\end{eqnarray}
Consider that
\begin{eqnarray*}
  &&\!\!\!\!\!\!\!\!\!\!\!\!\!\!\!\! \frac{1}{|C_{\varphi,\psi}|} \int_{\mathbb{R}^{d+1}_+} \int_{\gamma}^{\delta}\left|(\check{f}*\overline{\varphi_a}\check{)}*\psi_a(y) \overline{\mathcal{F}_{W}^{-1}(h)(y)}\right|\frac{da}{a}d\mu_\alpha(y)  \\ &=& \frac{1}{|C_{\varphi,\psi}|} \int_{\gamma}^{\delta}\left(\int_{\mathbb{R}^{d+1}_+} \left|(\check{f}*\overline{\varphi_a}\check{)}*\psi_a(y) \overline{\mathcal{F}_{W}^{-1}(h)(y)}d\mu_\alpha(y)\right)\right|\frac{da}{a}.
\end{eqnarray*}
By applying H\"{o}lder's inequality to right hand side of the previous equality, we obtain
\begin{eqnarray*}
  &&\!\!\!\!\!\!\!\!\!\!\!\!\!\!\!\! \frac{1}{|C_{\varphi,\psi}|} \int_{\gamma}^{\delta}\left(\int_{\mathbb{R}^{d+1}_+} \left|(\check{f}*\overline{\varphi_a}\check{)}*\psi_a(y) \overline{\mathcal{F}_{W}^{-1}(h)(y)}d\mu_\alpha(y)\right)\right|\frac{da}{a}  \\ &\leq & \frac{1}{|C_{\varphi,\psi}|} \int_{\gamma}^{\delta}\left(\int_{\mathbb{R}^{d+1}_+} \left|(\check{f}*\overline{\varphi_a}\check{)}*\psi_a(y) \overline{\mathcal{F}_{W}^{-1}(h)(y)}d\mu_\alpha(y)\right)\right|\frac{da}{a}.
\end{eqnarray*}
From the assertion $(ii)$ of Lemma \ref{lem1} and Plancherel's formula for Weinstein transform (\ref{Plancherel formula}), we get
\begin{eqnarray*}
  &&\!\!\!\!\!\!\!\!\!\!\!\!\!\!\!\! \frac{1}{|C_{\varphi,\psi}|} \int_{\gamma}^{\delta}\left(\int_{\mathbb{R}^{d+1}_+} \left|(\check{f}*\overline{\varphi_a}\check{)}*\psi_a(y) \overline{\mathcal{F}_{W}^{-1}(h)(y)}d\mu_\alpha(y)\right)\right|\frac{da}{a}  \\ &\leq & \frac{1}{|C_{\varphi,\psi}|} \left(\int_{\gamma}^{\delta}\frac{da}{a}\right)\|\mathcal{F}_{W}(\varphi)\|_{\alpha, \infty} \|\mathcal{F}_{W}(\psi)\|_{\alpha, \infty} \|h\|_{\alpha, 2}\|f\|_{\alpha, 2}.
\end{eqnarray*}
Then, from Fubini's theorem, the right hand side of the relation (\ref{fgamadeltaconvointegF}) can also be
written in the form
\begin{equation*}
 \frac{1}{C_{\varphi,\psi}}\int_{\gamma}^{\delta} \left(\int_{\mathbb{R}^{d+1}_+} (\check{f}*\overline{\varphi_a}\check{)}*\psi_a(y) \overline{\mathcal{F}_{W}^{-1}(h)(y)}d\mu_\alpha(y)\right)\frac{da}{a}.
\end{equation*}
Moreover, according to the Parseval's formula for the Weinstein transform (\ref{MM}) and the assertion $(i)$ of the Lemma \ref{lem1}, the pervious integral becomes
\begin{equation*}
 \frac{1}{C_{\varphi,\psi}}\int_{\gamma}^{\delta} \left(\int_{\mathbb{R}^{d+1}_+} \mathcal{F}_{W}(f)(\xi)\overline{\mathcal{F}_{W}(\varphi_a)(\xi) } \mathcal{F}_{W}(\psi_a)(\xi) \overline{h(\xi)}d\mu_\alpha(\xi)\right)\frac{da}{a}.
\end{equation*}
By applying Fubini's theorem to this last integral, we have
\begin{eqnarray}\label{inteFK}
  &&\!\!\!\!\!\!\!\!\!\!\!\!\!\!\!\! \int_{\mathbb{R}^{d+1}_+} \mathcal{F}_{W}(f)(\xi)  \left(\frac{1}{C_{\varphi,\psi}}\int_{\gamma}^{\delta}\overline{\mathcal{F}_{W}(\varphi_a)(\xi) } \mathcal{F}_{W}(\psi_a)(\xi) \frac{da}{a}\right)\overline{h(\xi)}d\mu_\alpha(\xi)  \nonumber\\ &=& \int_{\mathbb{R}^{d+1}_+} \mathcal{F}_{W}(f)(\xi)  K_{\gamma,\delta}(\xi)\overline{h(\xi)}d\mu_\alpha(\xi).
\end{eqnarray}
On the other hand, by applying the Parseval's formula for the Weinstein transform (\ref{MM}) to the left hand side of the relation (\ref{fgamadeltaconvointegF}), it takes the  integral form
\begin{equation}\label{Ffdeltagamapsi}
  \int_{\mathbb{R}^{d+1}_+} \mathcal{F}_{W}(f_{\gamma,\delta})(\xi)\overline{h(\xi)}d\mu_\alpha(\xi).
\end{equation}
Finally, from the  relations (\ref{inteFK}) and (\ref{Ffdeltagamapsi}), we obtain for all $h$ in $\mathcal{S}_*(\mathbb{R}^{d+1})$
\begin{equation*}
  \int_{\mathbb{R}^{d+1}_+} \left(\mathcal{F}_{W}(f_{\gamma,\delta})(\xi)-\mathcal{F}_{W}(f)(\xi)  K_{\gamma,\delta}(\xi)\right)\overline{h(\xi)}d\mu_\alpha(\xi)=0,
\end{equation*}
and we deduce that
\begin{equation*}
   \mathcal{F}_{W}(f_{\gamma,\delta})(\xi)=\mathcal{F}_{W}(f)(\xi)K_{\gamma,\delta}(\xi),\quad \xi\in \mathbb{R}^{d+1}_+.
\end{equation*}
\end{proof}
We now return to the proof of the Theorem \ref{mainthm}.
\begin{proof} \emph{of Theorem \ref{mainthm}}. At first, from Theorem \ref{thm2}, the function $f_{\gamma,\delta}$ belongs to $L^2_{\alpha}(\mathbb{R}^{d+1}_+)$. Next, according to Plancherel's formula for Weinstein transform (\ref{Plancherel formula}) and Theorem \ref{thm2}, we get
\begin{eqnarray*}
  \|f_{\gamma,\delta}-f\|_{\alpha,2} &=&   \int_{\mathbb{R}^{d+1}_+} |\mathcal{F}_{W}(f_{\gamma,\delta}-f)(\xi)|^2\alpha(\xi) \\
   &=&  \int_{\mathbb{R}^{d+1}_+} |\mathcal{F}_{W}(f)(K_{\gamma,\delta}(\xi)-1)|^2\alpha(\xi) \\
   &=& \int_{\mathbb{R}^{d+1}_+} |\mathcal{F}_{W}(f)|^2|(1-K_{\gamma,\delta}(\xi))|^2\alpha(\xi).
\end{eqnarray*}
Another time, according to Theorem \ref{thm2} we have for almost  all $\xi \in \mathbb{R}^{d+1}_+$
$$\lim_{(\gamma,\delta)\to (0,\infty)}|\mathcal{F}_{W}(f)|^2|(1-K_{\gamma,\delta}(\xi))|^2=0,$$
and there exists  a positive constant $C$ such that
$$\lim_{(\gamma,\delta)\to (0,\infty)}|\mathcal{F}_{W}(f)|^2|(1-K_{\gamma,\delta}(\xi))|^2\leq C|\mathcal{F}_{W}(f)|^2,$$
with $|\mathcal{F}_{W}(f)|^2$ is in $L^1_{\alpha}(\mathbb{R}^{d+1}_+)$. Thus, we conclude the relation (\ref{limf}) from the dominated convergence theorem.
\end{proof}


\begin{thebibliography}{10}

\bibitem{nahia1996spherical}
Z.~{Ben Nahia} and N.~{Ben Salem}.
\newblock {Spherical harmonics and applications associated with the {{
  W}}einstein operator.}
\newblock In {\em {Potential theory -- ICPT '94. Proceedings of the
  international conference, Kouty, Czech Republic}}.

\bibitem{brelot1978equation}
M.~Brelot.
\newblock Equation de {W}einstein et potentiels de{{ M}}arcel {{ R}}iesz.
\newblock In {\em S{\'e}minaire de Th{\'e}orie du Potentiel Paris, No. 3},
  pages 18--38. Springer, 1978.

\bibitem{daubechies1992ten}
I.~Daubechies.
\newblock {\em Ten lectures on wavelets}, volume~61.
\newblock Siam, 1992.

\bibitem{gasmi2016inversion}
A.~Gasmi, H.~B. Mohamed, and N.~Bettaibi.
\newblock {Inversion of Weinstein intertwining operator and its dual using
  Weinstein wavelets}.
\newblock {\em Analele Universitatii" Ovidius" Constanta-Seria Matematica},
  24(1):289--307, 2016.

\bibitem{goupillaud1984cycle}
P.~Goupillaud, A.~Grossmann, and J.~Morlet.
\newblock Cycle-octave and related transforms in seismic signal analysis.
\newblock {\em Geoexploration}, 23(1):85--102, 1984.

\bibitem{grossmann1984decomposition}
A.~Grossmann and J.~Morlet.
\newblock {Decomposition of Hardy functions into square integrable wavelets of
  constant shape}.
\newblock {\em SIAM journal on mathematical analysis}, 15(4):723--736, 1984.

\bibitem{holschneider1995wavelets}
M.~Holschneider.
\newblock Wavelets.
\newblock {\em An analysis tool}, 1995.

\bibitem{koornwinder1993continuous}
T.~H. Koornwinder.
\newblock The continuous wavelet transform.
\newblock In {\em Wavelets: An elementary treatment of theory and
  applications}, pages 27--48. World Scientific, 1993.

\bibitem{mehrez2017paley}
K.~Mehrez.
\newblock {P}aley--{W}iener theorem for the {W}einstein transform and
  applications.
\newblock {\em Integral Transforms and Special Functions}, 28(8):616--628,
  2017.

\bibitem{mejjaoli2017dunkl}
H.~Mejjaoli.
\newblock Dunkl two-wavelet theory and localization operators.
\newblock {\em Journal of Pseudo-Differential Operators and Applications},
  8(3):349--387, 2017.

\bibitem{mejjaoli2017new}
H.~Mejjaoli and A.~O.~A. Salem.
\newblock New results on the continuous {W}einstein wavelet transform.
\newblock {\em Journal of inequalities and applications}, 2017(1):270, 2017.

\bibitem{mejjaoli2011uncertainty}
H.~Mejjaoli and M.~Salhi.
\newblock Uncertainty principles for the {{W}}einstein transform.
\newblock {\em Czechoslovak mathematical journal}, 61(4):941--974, 2011.

\bibitem{mejjaoli2017time}
H.~Mejjaoli and K.~Trim{\`e}che.
\newblock {Time--Frequency Concentration, Heisenberg Type Uncertainty
  Principles and Localization Operators for the Continuous Dunkl Wavelet
  Transform on $\mathbb{R}^d$}.
\newblock {\em Mediterranean Journal of Mathematics}, 14(4):146, 2017.

\bibitem{meyer1992wavelets}
Y.~Meyer.
\newblock {\em Wavelets and operators}, volume~1.
\newblock Cambridge university press, 1992.

\bibitem{nahia1996mean}
Z.~B. Nahia and N.~B. Salem.
\newblock On a mean value property associated with the {{W}}einstein operator.
\newblock In {\em Proceedings of the International Conference on Potential
  Theory held in Kouty, Czech Republic (ICPT'94)}, pages 243--253, 1996.

\bibitem{salem2020hardy}
N.~B. Salem.
\newblock {Hardy--Littlewood--Sobolev type inequalities associated with the
  Weinstein operator}.
\newblock {\em Integral Transforms and Special Functions}, 31(1):18--35, 2020.

\bibitem{salem2015heisenberg}
N.~B. Salem and A.~R. Nasr.
\newblock Heisenberg-type inequalities for the {{W}}einstein operator.
\newblock {\em Integral Transforms and Special Functions}, 26(9):700--718,
  2015.

\bibitem{salem2015shapiro}
N.~B. SALEM and A.~R. NASR.
\newblock {SHAPIRO TYPE INEQUALITIES FOR THE WEINSTEIN AND THE WEINSTEIN-GABOR
  TRANSFORMS}.
\newblock {\em Konuralp Journal of Mathematics}, 5(1):68--76, 2015.

\bibitem{salem2016littlewood}
N.~B. Salem and A.~R. Nasr.
\newblock {Littlewood--Paley g-function associated with the Weinstein
  operator}.
\newblock {\em Integral Transforms and Special Functions}, 27(11):846--865,
  2016.

\bibitem{ahmed2018calder}
A.~Saoudi.
\newblock {Calder\'on's reproducing formulas for the Weinstein $ L^2
  $-multiplier operators}.
\newblock {\em Asian-European Journal of Mathematics,
  https://doi.org/10.1142/S1793557121500030}.

\bibitem{ahmed2018variation}
A.~Saoudi.
\newblock {A variation of the $L^ p $ uncertainty principles for the Weinstein
  transform}.
\newblock {\em arXiv preprint https://arxiv.org/abs/1810.04484}, 2018.

\bibitem{saoudi2019weinstein}
A.~Saoudi.
\newblock {On the Weinstein--Wigner transform and Weinstein--Weyl transform}.
\newblock {\em Journal of Pseudo-Differential Operators and Applications},
  11(1):1--14, 2020.

\bibitem{saoudilocalisation}
A.~Saoudi and N.~Bochra.
\newblock {Boundedness and compactness of localization operators for
  Weinstein-Wigner transform}.
\newblock {\em Journal of Pseudo-Differential Operators and Applications,
  https://doi.org/10.1007/s11868-020-00328-0}, pages 1--28, 2020.

\bibitem{saoudi2019l2}
A.~Saoudi and I.~A. Kallel.
\newblock $ {L}^{2} $-uncertainty principle for the {W}einstein-{M}ultiplier
  {O}perators.
\newblock {\em International Journal of Analysis and Applications},
  17(1):64--75, 2019.

\bibitem{zbMATH03367521}
E.~M. {Stein} and G.~{Weiss}.
\newblock {Introduction to {{F}}ourier analysis on {E}uclidean spaces.}
\newblock {Princeton Mathematical Series. Princeton, N. J.: Princeton
  University Press. X, 297 p.}, 1971.

\bibitem{trimeche2019generalized}
K.~Trimeche.
\newblock {\em Generalized wavelets and hypergroups}.
\newblock Routledge, 2019.

\bibitem{weinstein1962singular}
A.~Weinstein.
\newblock {S}ingular partial differential equations and their applications.
\newblock {\em Fluid Dynamics and Applied Mathematics}, 67:29--49, 1962.

\end{thebibliography}
\end{document}